
\documentclass{amsart}
\usepackage{amsmath}
\usepackage{amssymb}
\usepackage[all]{xy}
\input xy
\xyoption{all} \CompileMatrices

\newtheorem{Th}{\bf Theorem}[section]
\newtheorem{Prop}[Th]{\bf Proposition}
\newtheorem{Le}[Th]{\bf Lemma}

\newtheorem{Df}[Th]{\bf Definition}
\newtheorem{remark}[Th]{\bf Remark}

\newcommand{\NN}{\mathbb{N}}

\newcommand{\CC}{\mathbb{C}}

\newcommand{\JJ}{\mathbb{J}}
\newcommand{\EE}{\mathbb{E}}


\begin{document}

\title{\bfseries ON PROPER AND EXTERIOR SEQUENTIALITY}

\author{L. Espa\~{n}ol, J.M. Garc\'{\i}a-Calcines and M.C. M\'{\i}nguez.}

\subjclass[2000]{54D55,54C10,54E99,18B99}   

\keywords{exterior space, sequential space, proper map,
e-sequential space, $\omega $-sequential space, Alexandroff
compactification, topos of sheaves.}

\address{L. Espa\~{n}ol Gonz\'{a}lez; M.C. M\'{\i}nguez Herrero \newline
\indent Departamento de Matem\'{a}ticas y Computaci\'{o}n \newline
\indent Universidad de La Rioja \newline \indent 26004 LOGRO\~{N}O
} \email{luis.espanol@dmc.unirioja.es}
\email{carmen.minguez@dmc.unirioja.es}

\address{J.M. Garc\'{\i}a Calcines \newline
\indent Departamento de Matem\'{a}tica Fundamental \newline
\indent Universidad de La Laguna \newline \indent 38271 LA
LAGUNA.} \email{jmgarcal@ull.es}

\thanks{Supported by MEC projects (Spain) BFM2000-0961, MTM2006-06317,
the UR project ACPI2001/10 PR I+D and FEDER }

\date{\today}

\begin{abstract}
In this article a sequential theory in the category of spaces and
proper maps is described and developed. As a natural extension a
sequential theory for exterior spaces and maps is obtained.
\end{abstract}

\maketitle

\section{Introduction}

Many mathematicians have been interested in studying relationships
between the topology of a space and its convergent sequences.
Among these we can mention M. Fr\'{e}chet \cite{Fre}, P. Urysohn
\cite{U}, A. Arhangel'skii \cite{A} or  J. Kisynski \cite{K}. In
the sixties, R. M. Dudley \cite{Dud} suggested to re-examine
topology from a sequential point of view; and S. P. Franklin in
1965 \cite{Fr} arrived at the satisfactory notion of sequential
space. Sequential spaces are the most general class of spaces for
which sequences suffice to determine the topology. This class of
spaces is so large that it includes the most important and useful
examples of topological spaces, such as CW-complexes, metric
spaces or topological manifolds. They also form a coreflective
subcategory $\mathbf{Seq}$ \cite{Ba} of the category
$\mathbf{Top}$ of topological spaces, and have good categorical
properties, including being complete, cocomplete and cartesian
closed. Sequential space theory not only interacts in general
topology and analysis, but also in topos theory, as it was shown
in \cite{J} by P.T. Johnstone. In his work P.T. Johnstone
presented the category of sequential spaces as a certain
subcategory of a topos of sheaves, where the embedding preserves
some useful colimits and exponentials.

It is natural to ask for a sequential theory in the category of
spaces and proper maps. Continuing Dudley's program, R. Brown
studied in 1973 the sequential versions of proper maps and of
one-point compactification \cite{Br}. In that paper he gives
sufficient conditions on any space $X$ to ensure that its
Alexandroff compactification $X^+$ is sequential. He also asks for
more general conditions for $X^+$ to be sequential. However, until
now, it seems that nobody has noticed the lack of an analogue
notion of sequential space in the proper scope. In this article we
present a proper sequential theory for spaces and solve the
questions posed by R. Brown. In order to do this we firstly use a
slightly different notion of sequentially proper map. This notion
turns out to be equivalent to that given by R. Brown when the
spaces have some natural sequential properties. Then we introduce
what we call {\it $\omega$-sequential spaces}, in which the closed
compact subsets are completely determined by the proper sequences
(and the convergent ones). This new class of spaces plays in the
category of spaces and proper maps $\mathbf{P}$ the same role
played in $\mathbf{Top}$ by the sequential spaces. We also show
that the $\omega$-sequential spaces verify the expected natural
properties. Perhaps one of the most significant properties is that
given any map $f:X\rightarrow Y$ between topological spaces, where
$X$ is $\omega $-sequential, then $f$ is proper if and only if $f$
preserves convergent sequences and proper sequences (see
Proposition \ref{omega2} (ii) for more details). We also check
that the $\omega $-sequential spaces contain enough important
examples, where the CW-complexes, the metric spaces and the
topological manifolds are included.

As it is known, $\mathbf{P}$ does not have good properties as far
as limits and colimits is concerned so many topological
constructions are not possible in the proper category.
Subsequently, several classical sequential results cannot be
transferred to the proper category.  A way to solve this problem
is to consider a greater category having better properties and
then define a convenient notion of sequential object. For
instance, we could use the Edwards-Hastings embedding \cite{E-H}
of the proper category of locally compact $\sigma $-compact
Hausdorff spaces into the category of pro-spaces. From our point
of view this embedding has important disadvantages. On the one
hand, it is necessary to consider strong restrictions of the
proper category. On the other hand many constructions give rise to
pro-spaces that cannot be interpreted as regular spaces. The
category of exterior spaces (see \cite{G-G-H,G-H}) is a good
possibility. Broadly speaking, an exterior space is a topological
space with a `neighborhood system at infinity' which we call
externology, while an exterior map is a continuous map which is
`continuous at infinity'. This category not only contains in its
totality the proper category but also does not lose the geometric
notion. Furthermore, it is complete and cocomplete. Simplicity in
their description and the similarity with the classical limit and
colimit constructions turn the exterior spaces to a useful and
powerful tool for the study of non-compact spaces.

Taking into account the above discussion, the last part of this
paper is devoted to extend our proper sequential theory to the
category $\mathbf{E}$ of exterior spaces. Such extension begins
establishing the definition of {\it e-sequential exterior space}
in such a way that an $\omega $-sequential space is a particular
case. We will analyze the new resulting category verifying that it
has analogous properties to those of the classic sequential
spaces. In particular the category $\mathbf{Eseq}$ of e-sequential
exterior spaces is a coreflective subcategory of $\mathbf{E}$, a
property which is not inherited in the proper case. Finally, and
similarly as done in the classical topological case by P.T.
Johnstone, we also prove that the category $\mathbf{Eseq}$ is a
full subcategory of a topos of sheaves.

\subsection*{Acknowledgements}
We want to acknowledge to I. Gotchev for providing us the useful
characterization of sequentiality compact spaces in terms of open
covers.

\section{Preliminary definitions and properties}

First of all, we will establish the more relevant notions and
properties, as well as their corresponding notation that will be
used throughout this work.

Recall that a proper map is a continuous map $f:X\rightarrow Y$
such that $f^{-1}(K)$ is closed compact, for every closed compact
subset $K\subset Y.$ Taking the filter of open subsets in a space
$(X,\tau _X)$
$$\varepsilon_{cc}(X)=\{U\in\tau_X;U^c\; \text{is compact}\},$$
\noindent where $U^c$ denotes the complement of $U$ in $X$, then
it is easy to check that $f$ is proper if and only if it is
continuous and $f^{-1}(V)\in\varepsilon_{cc}(X)$, $\forall
V\in\varepsilon_{cc}(Y).$

We will denote by $\mathbf{P}$ the category of spaces and proper
maps. If we consider for any space $X$ its Alexandroff
compactification as a based space $(X^+,\infty)$ then we have a
functor $(-)^+:\mathbf{P}\rightarrow \mathbf{Top^{\infty}}$. Here
$\mathbf{Top^{\infty}}$ denotes the subcategory  of
$\mathbf{Top^*}$ (the category of all based spaces and maps) whose
objects are based spaces $(X,x_{0})$ such that $\{x_{0}\}$ is
closed in $X$, and the morphisms are based maps $f:(X,x_{0})\to
(Y,y_{0})$ verifying $f^{-1}(\{y_{0}\})=\{x_{0}\}$. On the set of
natural numbers $\NN$ we consider the discrete topology, so
$\NN^+$ represents the sequence of natural numbers with its limit
$\{\infty \}$. On any space $X$ we have sequences as (continuous)
maps $s:\NN\to X,$ $s(n)=x_n$. The set of all sequences in $X$
will be denoted by $X^{\NN }$. On the other hand a continuous map
$s:\NN^+\to X$ is a sequence $s$ and a limit point $s(\infty)$. A
sequence $s:\NN\to X$ is said to be \emph{convergent} if it
factorizes through a continuous map $\NN^+\to X$. The set of all
convergent sequences of a space $X$ will be denoted by
$\Sigma_c(X)\subset X^\NN$, and we will write
$\Sigma_c^+(X)=\mathbf{Top}(\NN^+,X)$. We are interested in the
monoid $\EE$ of all monotone and injective sequences
$u:\NN\to\NN$, because each composition $s\circ u$ is a
subsequence of $s$, for all $u\in\EE$. Note that $\EE\subset
\mathbf{P}(\NN ,\NN)$. It is clear that for any space $X$,
$\Sigma_c(X)$ is an $\EE$-set with the (right) action given by
composition. The same is true for $X^\NN$, and in order to
`measure the convergency' of an arbitrary sequence $s:\NN\to X$ we
can use the ideal
$$\langle s\in\Sigma_c(X)\rangle=\{u\in\EE;s\circ
u\in\Sigma_c(X)\}$$ of $\EE$, which is equal to $\EE$ if and only
if $s$ is convergent. We can also consider those sequences without
convergent subsequences, that is, sequences $s$ such that $\langle
s\in\Sigma_c(X)\rangle=\emptyset$. We will denote by
$\neg\Sigma_c(X)\subset X^\NN$ the set of all such sequences,
where the symbol $\neg$ is used because it corresponds to the
negation in the Heyting algebra \cite{Jo} of all $\EE$-subsets of
$X^\NN$. In this way $\neg\Sigma_c(X)$ is the biggest $\EE$-subset
$E$ of $X^\NN$ such that $\Sigma_c(X)\cap E=\emptyset$.

Topological notions are defined in terms of open sets, and a
notion is called \emph{sequential} or \emph{sequentially defined}
when it is defined in terms of sequences. There are the typical
\emph{sequential spaces}, those which can be characterized by
their convergent sequences. A different kind of sequentiality is
that of \emph{sequential bornological spaces}, which can be
characterized by their bounded sequences. Now we are interested in
the sequentiality with respect to proper sequences. Proper maps
are continuous maps, so proper sequentiality shall be related to
the sequentiality by convergent sequences. We will assume that the
reader is familiarized to sequential spaces. Nevertheless,
although the reader may also refers to S. P. Franklin's article
\cite{Fr} we will recall here some basic notions.

Given a space $X=(X,\tau)$, a subset $U\subset X$ is said to be
\emph{sequentially open} if any sequence $s:\NN\to X$ that has a
limit point $x\in U$ is eventually in $U$. This can be stated in
the form
$$s\to x\in U \;\;\text{implies}\;\; s\propto U,$$ where $s\propto U$
means that $s^{-1}(U)$ is cofinite. The family of all sequentially
open subsets in $X$ is a topology $\tau_{seq}$. A space $X$ is
said to be \emph{sequential} if the open subsets agree with the
sequentially open subsets, that is, $\tau_{seq}=\tau$. A subset
$C\subset X$ is \emph{sequentially closed} if its complement in
$X$ is sequentially open, that is, $$s\to x,\; s\propto C
\;\;\text{implies}\;\; x\in C.$$  A map $f:X\to Y$ is
\emph{sequentially continuous} if it preserves convergent
sequences and limits, that is, $f\circ s\in \Sigma_c^+(Y)$, for
all $s\in \Sigma_c^+(X);$ in order to express this condition we
will write $f\circ\Sigma_c^+(X)\subset\Sigma_c^+(Y).$ Every
continuous map is sequentially continuous but in general the
converse is not true.  We will denote by $\mathbf{Seq}$ the full
subcategory of $\mathbf{Top}$ defined by the sequential spaces. It
is well known that $\mathbf{Seq}$ is a coreflective subcategory
(see \cite{Ba}), and has much better categorical properties than
$\mathbf{Top}$.

Now recall that a \emph{sequentially Hausdorff} space (or a space
which has unique sequential limits) is a space $X$ such that every
convergent sequence $s$ has a unique limit point, that is, the
factorization of $s$ trough $\NN^+$ is unique if it exists. Most
of the well known spaces are sequentially Hausdorff, but not all
of them. For instance, let $\mathbf{2}_S$ denote the space given
by the set $\mathbf{2}=\{0,1\}$ with the Sierpinski topology
$\tau_{\mathbf{2}_S}=\{\mathbf{2},\emptyset ,\{1\}\}.$ Giving a
continuous map $\varphi:X\to\mathbf{2}_S$ is the same as giving an
open subset $U=\varphi^{-1}(1)$; hence the Sierpinski space
$\mathbf{2}_S$ is not sequentially Hausdorff because there exists
two open subsets of $\NN^+$ containing $\NN$.

\begin{remark}
It is also important to remark that every sequentially Hausdorff
space is $T_1$ (so $T_0$) by \cite[Th.1]{W}. \end{remark} The next
definition will be crucial for our purposes.

\begin{Df}
We say that a space $X$ is $S_2$ (or $X$ is an $S_2$-space) when
it is sequential and sequentially Hausdorff.
\end{Df}

Recall also that a space is \emph{sequentially compact} if any
sequence has a convergent subsequence. Considering the
$\EE$-subsets of $X^\NN$, this means that $\langle
s\in\Sigma_c(X)\rangle\neq\emptyset$ for any  sequence $s$, that
is, $\neg\Sigma_c(X)=\emptyset$. The following theorem, due to I.
Gotchev and H. Minchev \cite{G-M}, characterizes sequential
compactness and will be important for the next result. Here, a
sequentially open cover of $X$ means a cover whose elements are
sequentially open sets.

\begin{Th}\label{Got}
For a $T_0$ topological space $X$ the following conditions are
equivalent:
\begin{enumerate}
\item[(i)] $X$ is a sequentially compact space.

\item[(ii)] Every countable sequentially open cover of $X$ has a
finite subcover.
\end{enumerate}
\end{Th}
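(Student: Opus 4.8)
The plan is to prove the two implications separately, after a preliminary reduction to a sequential topology. Since $\tau\subseteq\tau_{seq}$ and, by definition, every sequentially open set already absorbs the tail of any convergent sequence that meets it, the identity $(X,\tau)\to(X,\tau_{seq})$ both preserves and reflects convergent sequences; indeed, convergence in the finer topology $\tau_{seq}$ trivially implies convergence in $\tau$, while $s\to x$ in $\tau$ together with $x\in U\in\tau_{seq}$ forces $s\propto U$. Hence $\Sigma_c^+(X)$, and therefore sequential compactness, the family of sequentially open sets, and the $T_0$ separation are all unchanged on passing to $\tau_{seq}$. So I may assume $X$ is sequential, in which case ``sequentially open'' means ``open'' and condition (ii) is precisely countable compactness.

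For (i)$\Rightarrow$(ii) I argue contrapositively on a fixed countable sequentially open cover $\{U_n\}_{n\in\NN}$. Replacing $U_n$ by $V_n=U_0\cup\cdots\cup U_n$, which is again sequentially open since $\tau_{seq}$ is a topology, I obtain an increasing cover. If it admitted no finite subcover then $V_n\neq X$ for every $n$, so I may choose $x_n\in X\setminus V_n$. By (i) some subsequence converges, say $s\circ u\to x$, and $x\in V_m$ for some $m$ because the $V_n$ cover $X$. As $V_m$ is sequentially open, $s\circ u\propto V_m$; but $u(k)\geq m$ for large $k$, and then $x_{u(k)}\notin V_{u(k)}\supseteq V_m$, a contradiction. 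Hence a finite subcover exists. Note that this direction uses neither $T_0$ nor sequentiality.

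For (ii)$\Rightarrow$(i), again contrapositively, assume $X$ is not sequentially compact, i.e. there is $s\in\neg\Sigma_c(X)$. A sequence with finite range has a constant, hence convergent, subsequence, so the range of $s$ is infinite and I may pass to an injective subsequence $s'$ with distinct values $A=\{a_n\}$; clearly $s'\in\neg\Sigma_c(X)$ as well. The aim is to manufacture from $A$ a countable open cover with no finite subcover, contradicting (ii). The natural candidates are the complements $U_N=X\setminus\overline{\{a_n:n\geq N\}}$ of the tail-closures: they are open and increasing, and if $\bigcap_N\overline{\{a_n:n\geq N\}}=\emptyset$ then $\{U_N\}$ is a countable open cover, so countable compactness forces some tail-closure to be empty, which is impossible. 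Thus the task reduces to ruling out cluster points of $s'$.

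The main obstacle is exactly this last step: in a general space a cluster point of $s'$ need not be a subsequential limit, so ``no convergent subsequence'' does not at once give ``no cluster point''. This is where sequentiality and $T_0$ are needed. Sequentiality lets me realize a putative cluster point $x$ as a limit of sequences drawn from the tails; the delicate issue is that such a sequence may be eventually constant at some $a_m$, and without Hausdorffness a constant sequence can converge to several points. The relation ``the constant sequence at $a_m$ converges to $x$'' is precisely $x\in\overline{\{a_m\}}$, that is, $x\le a_m$ in the specialization order, and the $T_0$ hypothesis makes $\le$ a partial order. I would use this to organize the values $a_m$ and to promote a genuine cluster point into an injective, properly extracted convergent subsequence of $s'$, contradicting $s'\in\neg\Sigma_c(X)$. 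Carrying out this extraction, and verifying that the degenerate constant-limit behaviour cannot sabotage the cover, is the crux of the argument.
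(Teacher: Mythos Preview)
The paper does not prove this theorem; it is quoted from Gotchev--Minchev \cite{G-M} and used as a black box, so there is no in-paper argument to compare against. Your reduction to $(X,\tau_{seq})$ and your proof of (i)$\Rightarrow$(ii) are correct, but (ii)$\Rightarrow$(i) is not a proof: you locate the difficulty precisely---a cluster point of $s'$ need not be a subsequential limit in a merely sequential space---and then stop, saying that ``carrying out this extraction \dots\ is the crux of the argument''. The vague plan to ``organise the values $a_m$'' via antisymmetry of the specialization order does not, as written, produce the required subsequence.

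Here is the missing step, which in fact does not use antisymmetry. Replace the tail-closures by the specialization down-sets $B_N=\bigcup_{n\ge N}\overline{\{a_n\}}$. Since open sets are upward closed for the specialization preorder, $t_k\le a_{m_k}$ together with $t_k\to y$ forces $a_{m_k}\to y$. With this observation each $B_N$ is sequentially closed: given $t_k\in B_N$ with $t_k\to y$, pick $m_k\ge N$ with $t_k\le a_{m_k}$; either the $m_k$ are bounded and a constant subsequence places $y\in\overline{\{a_m\}}\subset B_N$, or one extracts strictly increasing $m_{k_j}$ and obtains a convergent subsequence $(a_{m_{k_j}})$ of $s'$, contrary to hypothesis. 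The same observation gives $\bigcap_N B_N=\emptyset$, since a point lying below $a_{m(N)}$ for unboundedly many $m(N)$ again yields a convergent subsequence. Hence $\{X\setminus B_N\}$ is a countable sequentially open cover with no finite subcover, contradicting (ii). Your instinct that antisymmetry of $\le$ is the crux is therefore off target; the $T_0$ hypothesis appears to be dispensable in this equivalence.
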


A \textit{proper sequence} in $X$ is just a proper map
$s:\mathbb{N}\rightarrow X$, that is, a sequence such that
$s^{-1}(K)$ is finite for every closed compact subset $K\subset
X$; in other words, $s\propto U$ for all
$U\in\varepsilon_{cc}(X)$. Denoting by $\Sigma_p(X)$ the set of
the proper sequences in $X$, the following result shows how
different are proper sequences and convergent sequences in
$S_2$-spaces.

\begin{Th}\label{sucprop}
Let $X$ be an $S_2$-space. Then $\Sigma_p(X)=\neg\Sigma_c(X)$.
\end{Th}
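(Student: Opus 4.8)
The plan is to prove the two inclusions separately, in both cases exploiting the fact that in an $S_2$-space the closed compact subsets are precisely the ones that govern the existence of convergent subsequences.

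For the inclusion $\neg\Sigma_c(X)\subseteq\Sigma_p(X)$, I would argue by contraposition: given a sequence $s$ that is \emph{not} proper, I produce a convergent subsequence. If $s$ is not proper there is a closed compact $K\subseteq X$ with $s^{-1}(K)$ infinite. The key observation is that $K$ is sequentially compact. Indeed, $K$ is $T_0$, being a subspace of the $T_1$ space $X$ (by the Remark), and a closed subspace of a sequential space is again sequential, so on $K$ the sequentially open sets coincide with the open sets; since $K$ is compact, every countable sequentially open cover of $K$ has a finite subcover, and Theorem \ref{Got} then yields that $K$ is sequentially compact. Enumerating an infinite part of $s^{-1}(K)$ by some $v\in\EE$ gives a sequence $s\circ v$ taking values in $K$, which by sequential compactness has a subsequence $s\circ v\circ w$ (with $w\in\EE$) converging in $K$, hence in $X$. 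Thus $v\circ w\in\langle s\in\Sigma_c(X)\rangle$, so $s\notin\neg\Sigma_c(X)$.

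For the reverse inclusion $\Sigma_p(X)\subseteq\neg\Sigma_c(X)$, suppose $s$ is proper but that some subsequence $t=s\circ u$ converges to a point $x$; I aim to contradict properness by exhibiting a closed compact set meeting the image of $s$ infinitely often. The natural candidate is $K=t(\NN)\cup\{x\}$, the image of the continuous extension $t^+:\NN^+\to X$. Compactness of $K$ is immediate from compactness of $\NN^+$ (equivalently, from the standard fact that a convergent sequence together with its limit is compact). The main obstacle is showing that $K$ is closed, and this is exactly where sequential Hausdorffness is indispensable. Since $X$ is sequential it suffices to check that $K$ is sequentially closed, so I take $r\to y$ with $r$ eventually in $K$ and show $y\in K$. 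I split into two cases: if some value of $r$ is attained infinitely often, a constant subsequence forces $y\in K$ by sequential Hausdorffness; if every fibre of $r$ is finite, I extract an injective subsequence, which must converge to $x$ because $t\to x$ leaves only finitely many values of $t$ outside any neighbourhood of $x$, so that an injective sequence in $t(\NN)$ is eventually in each such neighbourhood, again forcing $y=x$ by sequential Hausdorffness.

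With $K$ closed compact and $u(\NN)\subseteq s^{-1}(K)$ infinite, properness of $s$ is contradicted, so $s$ can have no convergent subsequence. I expect the closedness of $K$ in this second inclusion to be the only delicate point, precisely because it is there that the sequential Hausdorff hypothesis is used in an essential way; by contrast the first inclusion is comparatively routine once Theorem \ref{Got} is invoked.
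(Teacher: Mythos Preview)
Your argument is correct and follows the same two-step strategy as the paper: for $\neg\Sigma_c(X)\subseteq\Sigma_p(X)$ you both pass to a closed compact $K$ witnessing non-properness, invoke Theorem~\ref{Got} to get sequential compactness of $K$, and extract a convergent subsequence; for $\Sigma_p(X)\subseteq\neg\Sigma_c(X)$ you both take $K$ to be the trace of a convergent (sub)sequence together with its limit and show this $K$ is closed compact.

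There is one streamlining in the paper worth noting. Because $\Sigma_p(X)$ is an $\EE$-subset of $X^{\NN}$ (subsequences of proper sequences are proper) and $\neg\Sigma_c(X)$ is by definition the largest $\EE$-subset disjoint from $\Sigma_c(X)$, the inclusion $\Sigma_p(X)\subseteq\neg\Sigma_c(X)$ is \emph{equivalent} to the disjointness $\Sigma_p(X)\cap\Sigma_c(X)=\emptyset$. The paper uses this to avoid mentioning subsequences altogether in the second half: it simply takes a convergent $s$ and shows $s$ is not proper. Your version, starting from a proper $s$ with a convergent subsequence $t=s\circ u$, is really the same computation applied to $t$. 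Conversely, your case analysis proving that $K=t(\NN)\cup\{x\}$ is sequentially closed is a genuine detail that the paper suppresses in the single clause ``$K$ is compact in the $S_2$-space $X$, so $K$ is closed''; your write-up makes explicit exactly where sequential Hausdorffness enters.
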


\begin{proof}[Proof]

In order to prove that $\neg\Sigma_c(X)\subset\Sigma_p(X)$,
suppose a non-proper sequence $s:\mathbb{N}\rightarrow X$. Then
there exists a closed compact subset $K\subset X$ such that
$s^{-1}(K)$ is infinite; therefore we can find a subsequence $t$
of $s$ contained in $K$. But $K$ is also sequentially compact by
Theorem \ref{Got}, so there is a convergent subsequence of $t$ in
$K$. Hence $s$ is not in $\neg\Sigma_c(X)$.

Conversely, using the fact that the statement
$\Sigma_p(X)\subset\neg\Sigma_c(X)$ is equivalent to
$\Sigma_p(X)\cap\Sigma_c(X)=\emptyset$, consider a convergent
sequence $s:\mathbb{N}\to X$ with limit $x_0\in X$. Then
$K=Im(s)\cup \{x_0\}$ is compact in the $S_2$-space $X$, so $K$ is
closed. Hence $s^{-1}(K)$ is not finite and $s$ can not be proper.
\end{proof}

\begin{remark}Note that if $X$ is Hausdorff then the set $K$ in the above
proof is also closed, so $\Sigma_p(X)\subset\neg\Sigma_c(X)$ by
the proof of Theorem \ref{sucprop}.\end{remark}

In general spaces, proper sequences in $X$ are convergent
sequences in its Alexandroff compactification $X^+$ with limit the
based point $\infty$. We have the following statement, which is
easy to prove.

\begin{Prop}\label{equi}
The functor $(-)^+:\mathbf{P}\to \mathbf{Top^{\infty}}$ is full
and faithful, and it induces an equivalence
$$\mathbf{P}_{lcH}\simeq \mathbf{Top}^{\infty }_{cH}$$ between the
full subcategory $\mathbf{P}_{lcH}$ of $\mathbf{P}$ whose objects
are locally compact Hausdorff spaces and the full subcategory
$\mathbf{Top}^{\infty }_{cH}$ of $\mathbf{Top}^{\infty }$ whose
objects are compact Hausdorff spaces.
\end{Prop}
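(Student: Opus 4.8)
The plan is to treat the two assertions separately, first establishing full faithfulness of $(-)^+$ on all of $\mathbf{P}$, and then deducing the equivalence by checking that the restricted functor is essentially surjective onto $\mathbf{Top}^\infty_{cH}$.

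Faithfulness is immediate: for a proper map $f:X\to Y$ the extension $f^+$ satisfies $f^+|_X=f$ and $f^+(\infty)=\infty$, so $f$ is recovered from $f^+$ by restriction and the assignment $f\mapsto f^+$ is injective on hom-sets. For fullness I would start with an arbitrary morphism $h:(X^+,\infty)\to(Y^+,\infty)$ of $\mathbf{Top}^\infty$, so that $h(\infty)=\infty$ and $h^{-1}(\{\infty\})=\{\infty\}$. The condition on the fibre over $\infty$ forces $h(X)\subset Y$, hence $h$ restricts to a map $f:=h|_X:X\to Y$, which is continuous because $X$ is open in $X^+$ and carries there its own topology as a subspace. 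The heart of the argument is showing that $f$ is proper. Given a closed compact $K\subset Y$, the set $\{\infty\}\cup(Y\setminus K)$ is an open neighborhood of $\infty$ in $Y^+$, and continuity of $h$ together with $h^{-1}(\{\infty\})=\{\infty\}$ yields
$$h^{-1}(\{\infty\}\cup(Y\setminus K))=\{\infty\}\cup(X\setminus f^{-1}(K)),$$
which must be open in $X^+$; by the very definition of the Alexandroff topology, any open set containing $\infty$ has closed compact complement in $X$, so $f^{-1}(K)$ is closed compact. Hence $f$ is proper and $f^+=h$, giving surjectivity on hom-sets.

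For the equivalence I would first record that $(-)^+$ does send $\mathbf{P}_{lcH}$ into $\mathbf{Top}^\infty_{cH}$: this is the classical fact that the one-point compactification of a locally compact Hausdorff space is compact Hausdorff, together with the observation that $\{\infty\}$ is always closed in $X^+$. Since full faithfulness passes to full subcategories, it only remains to prove essential surjectivity. Given $(Z,z_0)\in\mathbf{Top}^\infty_{cH}$, I would set $X:=Z\setminus\{z_0\}$; as an open subspace of a compact Hausdorff space it is locally compact Hausdorff, and I claim that the identity on $X$, extended by $z_0\mapsto\infty$, is a based homeomorphism $Z\cong X^+$. This is precisely the uniqueness of the one-point compactification: an open set of $Z$ containing $z_0$ has complement lying in $X$ that is closed in $Z$ and hence compact (as $Z$ is compact), matching the basic neighborhoods of $\infty$ in $X^+$, while the open sets avoiding $z_0$ are exactly the open sets of $X$; the two topologies therefore coincide.

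I expect the main obstacle to be this essential-surjectivity step, where the Hausdorff hypothesis is genuinely used: it guarantees both that the removed point is closed and that closed subsets of $Z$ are compact, which is what makes the one-point compactification of $X=Z\setminus\{z_0\}$ reproduce $Z$ on the nose. The fullness computation, though the technical core of the first assertion, is then a direct translation between the externology $\varepsilon_{cc}$ and the neighborhoods of the point at infinity, and should present no real difficulty.
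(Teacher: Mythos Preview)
Your proposal is correct and follows the same route the paper indicates: the paper does not actually give a proof (it only says the statement ``is easy to prove'') and merely records the quasi-inverse $\bar X=X\setminus\{x_0\}$ with the relative topology, which is precisely the construction you use for essential surjectivity. Your fullness and faithfulness arguments are the natural details the paper omits, and your use of the Hausdorff hypothesis (compact subsets of $Z$ are closed, so the neighborhoods of $z_0$ match those of $\infty$ in $X^+$) is exactly the point the paper highlights afterwards with the $(\mathbf 2_S,0)$ counterexample.
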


The quasi-inverse of $(-)^+$ is obtained as follows: Given a based
space $(X,x_0)$ we take $\bar{X}=X-\{x_0\}$ equipped with the
relative topology $\tau _{\bar{X}}=\{A-\{x_0\}|A\in \tau_X\}$.

The condition of being Hausdorff cannot be removed in Proposition
\ref{equi}. The based space $(\mathbf{2}_S,0)$ is compact but it
does not come from the Alexandroff compactification. Otherwise,
$(\mathbf{1}^+,\infty )\cong (\mathbf{2}_S,0)$ would be
homeomorphic, where $\mathbf{1}=\{0\}$ is the one-point space. But
this is impossible.

\section{On a proper notion of sequentiality: $\omega$-sequential spaces}

In this section we will give the notion of $\omega$-sequential
space, which is the core of this work. These spaces play in the
proper case an analogous role to that played by the sequential
spaces since proper maps and sequentially proper maps between them
agree (see (ii) in Proposition \ref{omega2}). We will also see
that the class of $\omega $-sequential spaces contains, among
others, the CW-complexes, the metric spaces and the topological
manifolds.

Before going to our definition we will firstly deal with the
proper sequentiality of maps.

\subsection{Sequentially proper maps}

Since a proper map is a continuous map with a condition on
(closed) compact subsets, a sequentially proper map will be a
sequentially continuous map with a condition on proper sequences.

\begin{Df}\label{seprop}
Given spaces $X,Y$, a map $f:X\to Y$ is \emph{sequentially proper}
if it is sequentially continuous and it preserves proper
sequences, that is,
\begin{enumerate}
\item
$f\circ s\in \Sigma_c^+(Y),$ for all $s\in \Sigma_c^+(X);$ and

\item
$f\circ s\in \Sigma_p(Y),$ for all $s\in \Sigma_p(X).$
\end{enumerate}
In other words, $f\circ\Sigma_c^+(X)\subset\Sigma_c^+(Y)
\;\;\text{and}\;\; f\circ\Sigma_p(X)\subset\Sigma_p(Y).$
\end{Df}

There is an almost obvious relationship between the sequentially
proper maps and the sequentially continuous maps when we consider
Alexandroff compactifications.

\begin{Th}\label{equiva-prop}
Let $f:X\to Y$ be a map between spaces. Then $f$ is sequentially
proper if and only if $f^+:X^+\to Y^+$ is sequentially continuous.
\end{Th}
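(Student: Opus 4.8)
The plan is to prove both implications by translating between proper/convergent sequences in $X$ and convergent sequences in the compactification $X^+$, using two standard facts about the Alexandroff topology: that $X$ is an open subspace of $X^+$, and that the sets $\{\infty\}\cup U$ with $U\in\varepsilon_{cc}(X)$ form a neighbourhood base at $\infty$. The second fact yields, for a sequence landing entirely in $X$, the equivalence between being proper in $X$ and converging to $\infty$ in $X^+$ (as observed before Proposition \ref{equi}); I will use both directions of this equivalence, each being an immediate reformulation since $U^c$ ranges exactly over the closed compact subsets of $X$.

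For the implication ($\Leftarrow$), assume $f^+$ is sequentially continuous and verify conditions (1) and (2) of Definition \ref{seprop}. For (1), take $s\in\Sigma_c^+(X)$ with limit $x_0$; since $X\hookrightarrow X^+$ is continuous, $s$ is convergent in $X^+$ with limit $x_0$, so $f^+\circ s$ converges to $f^+(x_0)=f(x_0)$ in $Y^+$. As every term and the limit lie in $Y$ and the open sets of $Y$ are open in $Y^+$, this convergence already takes place in $Y$, so $f\circ s\in\Sigma_c^+(Y)$. For (2), take $s\in\Sigma_p(X)$; then $s$ converges to $\infty$ in $X^+$, hence $f^+\circ s$ converges to $f^+(\infty)=\infty$ in $Y^+$, and since $f\circ s$ lands in $Y$ this says precisely that $f\circ s$ is proper, i.e. $f\circ s\in\Sigma_p(Y)$.

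For the implication ($\Rightarrow$), assume $f$ is sequentially proper and take a continuous $s:\NN^+\to X^+$ with limit $L=s(\infty)$; I must show $f^+\circ s$ converges to $f^+(L)$, splitting according to $L$. If $L=x_0\in X$, then, because $X$ is open in $X^+$ and contains $x_0$, the sequence is eventually in $X$, so only finitely many terms equal $\infty$; its tail is a convergent sequence in $X$ with limit $x_0$, and condition (1) gives $f(s(n))\to f(x_0)$ in $Y$, whence $f^+\circ s\to f(x_0)=f^+(L)$ in $Y^+$ (the finitely many $\infty$-terms being irrelevant to the tail). If $L=\infty$, I partition $\NN=A\sqcup B$ into the indices with $s(n)=\infty$ and those with $s(n)\in X$. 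On $A$ the composite is constantly $\infty$; if $B$ is finite the composite is eventually $\infty$ and we are done, and if $B$ is infinite, reindexing it by a monotone injection $u\in\EE$ yields an honest sequence $t=s\circ u:\NN\to X$, which is proper because $s\to\infty$ forces $t$ eventually inside every $U\in\varepsilon_{cc}(X)$. Condition (2) then makes $f\circ t$ proper, i.e. $f(s(n))\to\infty$ along $B$, and combining the two index sets shows that for each $V\in\varepsilon_{cc}(Y)$ the composite $f^+\circ s$ is eventually in $\{\infty\}\cup V$, so it converges to $\infty=f^+(L)$.

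I expect the main obstacle to be the case $L=\infty$ of the forward implication, where $s$ may simultaneously have infinitely many terms at $\infty$ and infinitely many in $X$. The care needed is twofold: extracting from the $X$-valued part a genuine sequence $\NN\to X$ (via the subsequence monoid $\EE$) so that Definition \ref{seprop}(2) applies, and then recombining the contribution of the $\infty$-indices with that of the $X$-indices to recover eventual membership in a single neighbourhood of $\infty$. The remaining verifications—that $X$ is open in $X^+$ and that convergence restricted to $X$ or to $Y$ agrees with convergence in the compactification—are routine consequences of the Alexandroff topology.
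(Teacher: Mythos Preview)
Your proof is correct and follows exactly the approach of the paper, which simply observes that sequential continuity of $f$ corresponds to $f^+$ preserving convergent sequences with limit in $X$, while preservation of proper sequences corresponds to $f^+$ preserving convergent sequences with limit $\infty$, and explicitly leaves the details to the reader. Your write-up supplies precisely those details, including the one nontrivial point the paper glosses over: in the case $L=\infty$ of the forward direction one must split off the indices where $s(n)=\infty$, reindex the remaining ones via some $u\in\EE$ to obtain an honest proper sequence in $X$, and then recombine; this is exactly the argument the paper later spells out in the analogous Theorem~\ref{carac}.
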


\begin{proof}[Proof]
The fact that $f:X\to Y$ is sequentially continuous means that
$f^+:X^+\to Y^+$ preserves convergent sequences with limit in $X$.
Similarly, the fact that $f:X\to Y$ preserves proper sequences
means that $f^+:X^+\to Y^+$ preserves convergent sequences with
$\infty$ as limit, so the theorem follows. We leave the details to
the reader.
\end{proof}

Now we compare our Definition \ref{seprop} with a similar notion
given by R. Brown \cite{Br}. We say that $f:X\to Y$ is a
\textit{sequentially proper map in the sense of Brown} if it is
sequentially continuous and $f\times 1_Z:X\times Z\rightarrow
Y\times Z$ is sequentially closed (that is, preserves sequentially
closed subsets), for every space $Z$. In order to avoid confusion
with our definition we will say that a sequentially proper map in
the sense of Brown is a \emph{B-proper map}. The following theorem
relates B-proper maps and sequentially proper maps.

\begin{Th}Let $f:X\rightarrow Y$ be a function between $S_2$-spaces.
Then $f$ is sequentially proper if and only if $f$ is B-proper.
\end{Th}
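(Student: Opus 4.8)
The plan is to prove the two implications separately. Since sequential continuity appears in both definitions, in each direction the only genuine content concerns the relationship between preservation of proper sequences and the sequential closedness of the maps $f\times 1_Z$. The decisive tool throughout will be Theorem \ref{sucprop}: for an $S_2$-space it identifies the proper sequences with $\neg\Sigma_c$, i.e.\ with the sequences admitting no convergent subsequence. This lets me translate both conditions into statements purely about convergent subsequences, which is what makes the two notions comparable.

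First I would show that a sequentially proper $f$ is B-proper. Given a sequentially closed $C\subset X\times Z$, I take a sequence $(y_n,z_n)\in (f\times 1_Z)(C)$ converging to $(y_0,z_0)$ and choose preimages $(x_n,z_n)\in C$ with $f(x_n)=y_n$ (the $Z$-coordinate is preserved since $1_Z$ is the identity). The argument then splits according to the behaviour of $(x_n)$. If some subsequence $x_{n_k}$ converges to a point $x_0$, sequential continuity gives $f(x_{n_k})\to f(x_0)$, and as $y_{n_k}\to y_0$ with $Y$ sequentially Hausdorff I conclude $f(x_0)=y_0$; applying the sequential closedness of $C$ to $(x_{n_k},z_{n_k})\to(x_0,z_0)$ places $(x_0,z_0)$ in $C$, so $(y_0,z_0)\in(f\times 1_Z)(C)$. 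If instead $(x_n)$ has no convergent subsequence, then $(x_n)\in\neg\Sigma_c(X)=\Sigma_p(X)$, so $f$ preserving proper sequences forces $(y_n)\in\Sigma_p(Y)=\neg\Sigma_c(Y)$, which is absurd because $(y_n)\to y_0$ is itself a convergent subsequence of $(y_n)$. Hence the first case always occurs and $(f\times 1_Z)(C)$ is sequentially closed.

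For the converse I already have sequential continuity, so I only need $f$ to preserve proper sequences. Take $s=(x_n)\in\Sigma_p(X)=\neg\Sigma_c(X)$ and suppose, for contradiction, that $f\circ s$ has a convergent subsequence; passing to it I may assume $f(x_n)=y_n\to y_0$ while $(x_n)$ still has no convergent subsequence. The crux, and the step I expect to be the main obstacle, is choosing a test space that exhibits a failure of sequential closedness. I would take $Z=\NN^+$ and $C=\{(x_n,n):n\in\NN\}\subset X\times\NN^+$. Here $C$ is sequentially closed precisely because $(x_n)$ has no convergent subsequence: a convergent sequence in $C$ whose $\NN^+$-coordinate tends to $\infty$ would, after extracting a strictly increasing index subsequence, yield a convergent subsequence of $(x_n)$, while one whose coordinate has a finite limit is eventually constant and so converges inside $C$. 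B-properness then makes $(f\times 1_{\NN^+})(C)=\{(y_n,n):n\in\NN\}$ sequentially closed; but $(y_n,n)\to(y_0,\infty)$, whose second coordinate is never attained in this set, a contradiction. Thus $f\circ s\in\neg\Sigma_c(Y)=\Sigma_p(Y)$, and $f$ is sequentially proper. The only point deserving real care is the direct verification, from the coordinatewise description of convergence in the products $X\times\NN^+$ and $Y\times\NN^+$, that $C$ and its image are sequentially closed.
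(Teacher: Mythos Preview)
Your argument is correct, but the paper takes a much shorter route. It simply observes that, by Theorem~\ref{sucprop}, the conditions $f\circ\Sigma_p(X)\subset\Sigma_p(Y)$ and $f\circ\neg\Sigma_c(X)\subset\neg\Sigma_c(Y)$ coincide for $S_2$-spaces, and then invokes Brown's characterization \cite[Th.~2.6]{Br}, which says that a sequentially continuous map is B-proper exactly when it satisfies the latter condition. What you have done is essentially to reprove Brown's theorem from scratch in this setting: your first implication shows directly that preservation of $\neg\Sigma_c$ forces each $f\times 1_Z$ to be sequentially closed, and your second implication uses the classical test space $Z=\NN^+$ with the graph $C=\{(x_n,n)\}$ to extract the $\neg\Sigma_c$-preservation from B-properness. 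Your approach is more self-contained and makes visible why the equivalence holds, while the paper's is a two-line reduction once \cite{Br} is available; both ultimately rest on the same identification $\Sigma_p=\neg\Sigma_c$ provided by Theorem~\ref{sucprop}.
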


\begin{proof}[Proof]
By Theorem \ref{sucprop} the following conditions are equivalent
for $S_2$-spaces:

(i) $f\circ\Sigma_p(X)\subset\Sigma_p(Y)$

(ii) $f\circ\neg\Sigma_c(X)\subset\neg\Sigma_c(Y)$

Suppose that $f$ is sequentially continuous. Then (i) means that
$f$ is sequentially proper, and (ii) means that $f$ is B-proper by
\cite[Th. 2.6]{Br}.
\end{proof}

\begin{remark}
Note that the statement `$f$ B-proper implies $f$ sequentially
proper' is true when $X$ is just Hausdorff instead of being $S_2$,
and the converse is also true when $Y$ is just Hausdorff instead
of being $S_2$.\end{remark}

It is clear that every proper map is sequentially proper. The
following task is to find the class of spaces $X,Y$ in which the
proper maps $f:X\to Y$ and the sequentially proper maps agree.

\subsection{s-compact subsets and $\omega$-sequential spaces}

Now we give a sequential notion which is weaker than that of
closed compact subset and well adapted to proper sequentiality.
When these two families of subsets agree we have a
$\omega$-sequential space.

\begin{Df}\label{omega}
Let $X$ be a space.

(i) We say that $C\subset X$ is \emph{s-compact} if $C$ is
sequentially closed and every proper sequence is eventually in the
complement of $C$.

(ii) $X$ is said to be $\omega$\emph{-sequential} if it is a
sequential space and the s-compact subsets agree with the closed
compact subsets.
\end{Df}

For any space $X$ we introduce the family of sequentially open
subsets
$$\varepsilon_{sc}(X)=\{U\subset X;U^c\; \text{is s-compact}\}.$$
Note that in an $\omega$-sequential space $X$, the family
$\varepsilon_{sc}(X)$ is a filter of open subsets. The latter
notation was given to express the following immediate result. The
proof is straightforward and left to the reader.

\begin{Prop}\label{omega2} Let $f:X\rightarrow Y$ be a map between topological
spaces. Then
\begin{enumerate}
\item[(i)] $f:X\rightarrow Y$ is a sequentially proper map if
and only if it is sequentially continuous and
$f^{-1}(V)\in\varepsilon_{sc}(X),$ for all
$V\in\varepsilon_{sc}(Y).$

\item[(ii)] Suppose that $X$ is $\omega$-sequential. Then $f$ is
proper if and only if $f$ is sequentially proper.
\end{enumerate}
\end{Prop}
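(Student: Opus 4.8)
The plan is to treat both parts as an unwinding of the definitions, with part (ii) following from part (i) together with the $\omega$-sequential hypothesis. Recall that $V\in\varepsilon_{sc}(Y)$ means exactly that $V$ is sequentially open and that $s\propto V$ for every $s\in\Sigma_p(Y)$. The single observation that glues everything together is the containment $\varepsilon_{cc}(Y)\subset\varepsilon_{sc}(Y)$: if $W^c$ is closed compact then it is in particular sequentially closed, and every proper sequence is by definition eventually in $W$, so $W^c$ is s-compact.

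For (i), sequential continuity appears on both sides of the equivalence, so I assume it throughout and compare only the preservation of proper sequences with the preimage condition. A sequentially continuous $f$ pulls sequentially open sets back to sequentially open sets: if $s\to x$ with $x\in f^{-1}(V)$ then $f\circ s\to f(x)\in V$, whence $f\circ s\propto V$ and so $s\propto f^{-1}(V)$. Thus the ``sequentially closed'' half of the requirement that $f^{-1}(V^c)$ be s-compact is automatic, and $f^{-1}(V)\in\varepsilon_{sc}(X)$ reduces to the assertion that $s\propto f^{-1}(V)$ for every $s\in\Sigma_p(X)$. The forward direction is then immediate: if $f$ preserves proper sequences, then for $V\in\varepsilon_{sc}(Y)$ and $s\in\Sigma_p(X)$ we have $f\circ s\in\Sigma_p(Y)$, hence $f\circ s\propto V$, which is precisely $s\propto f^{-1}(V)$. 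For the converse I take $s\in\Sigma_p(X)$ and test $f\circ s$ against an arbitrary $W\in\varepsilon_{cc}(Y)$; here the containment enters, since it lets me apply the hypothesis $f^{-1}(W)\in\varepsilon_{sc}(X)$ to obtain $s\propto f^{-1}(W)$, that is $f\circ s\propto W$, and as $W$ ranges over $\varepsilon_{cc}(Y)$ this shows $f\circ s\in\Sigma_p(Y)$.

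For (ii), the implication proper $\Rightarrow$ sequentially proper is the easy direction already noted in the text. For the converse, assume $f$ is sequentially proper with $X$ an $\omega$-sequential space. Since $X$ is sequential, sequential continuity of $f$ upgrades to genuine continuity, because in a sequential space a sequentially open preimage of an open (hence sequentially open) set is open. It then remains to show $f^{-1}(V)\in\varepsilon_{cc}(X)$ for every $V\in\varepsilon_{cc}(Y)$: by the containment such a $V$ lies in $\varepsilon_{sc}(Y)$, so part (i) gives $f^{-1}(V)\in\varepsilon_{sc}(X)$, i.e. $f^{-1}(V)^c$ is s-compact; and because $X$ is $\omega$-sequential the s-compact subsets coincide with the closed compact ones, whence $f^{-1}(V)^c$ is closed compact and $f$ is proper.

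The only genuinely substantive point, as opposed to pure bookkeeping, is the mismatch between the two filters: proper sequences are defined through the closed compact complements captured by $\varepsilon_{cc}$, whereas the preimage condition is phrased through $\varepsilon_{sc}$. Recognizing the containment $\varepsilon_{cc}(Y)\subset\varepsilon_{sc}(Y)$, together with the fact that properness of $f\circ s$ need only be tested on members of $\varepsilon_{cc}(Y)$, is what reconciles the two, and it is the step I would isolate first.
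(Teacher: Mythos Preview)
Your proof is correct and is exactly the kind of definition-unwinding the paper has in mind: the authors write only that ``the proof is straightforward and left to the reader,'' and what you have supplied is the routine verification they omit. The one point worth making explicit---the containment $\varepsilon_{cc}(Y)\subset\varepsilon_{sc}(Y)$ needed for the converse in (i)---is precisely the observation you isolate, so there is nothing to add.
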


Now we will give some interesting properties about
$\omega$-sequential spaces. In order to do this we must give
relationships between s-compact, sequentially compact and
countably compact subsets. Under certain weak properties on the
space $X$ these subsets agree. Recall that a space is said to be
\textit{countably compact} when every countable open cover has a
finite subcover.

\begin{Le}\label{lemon}
Let $X$ be a space and $C\subset X$. If $C$ is closed and
s-compact then $C$ is countably compact. In particular, when $X$
is a sequential space, every s-compact subset is countably
compact.
\end{Le}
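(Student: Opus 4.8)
The plan is to argue by contraposition, using the standard characterisation that a space is countably compact if and only if every sequence in it has a cluster point (an accumulation point, i.e.\ a point every neighbourhood of which contains $s(n)$ for infinitely many $n$). So suppose $C$ is closed but \emph{not} countably compact. Applying this characterisation to the subspace $C$, there is a sequence $s:\NN\to C$ with no cluster point in $C$. Since $C$ is closed in $X$, every cluster point of $s$ in $X$ would lie in $\overline{Im(s)}\subset\overline{C}=C$; hence $s$ has no cluster point in $X$ either.

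Next I would show that this $s$ is a \emph{proper} sequence. Let $K\subset X$ be closed and compact, and suppose for a contradiction that $s^{-1}(K)$ is infinite. Then $s$ restricts to a subsequence lying in $K$. Now a compact space is countably compact, so by the cluster-point characterisation this restricted subsequence has a cluster point in $K\subset X$; but a cluster point of a subsequence is a cluster point of $s$, contradicting that $s$ has none. Therefore $s^{-1}(K)$ is finite for every closed compact $K$, that is, $s\in\Sigma_p(X)$. I expect this middle step to be the crux: the key input is that compactness of $K$ forces every sequence in $K$ to cluster (via compact $\Rightarrow$ countably compact), and one must check that the cluster point produced inside $K$ is genuinely a cluster point of the original sequence $s$ in $X$.

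Now the contradiction is immediate: $s$ takes all its values in $C$, so $s$ is never in the complement $C^c$ and in particular is \emph{not} eventually in $C^c$. This contradicts the hypothesis that $C$ is s-compact, whose defining condition (Definition \ref{omega} (i)) requires every proper sequence to be eventually in the complement of $C$. Hence $C$ must be countably compact, proving the first assertion.

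For the final statement, suppose $X$ is sequential and $C\subset X$ is s-compact. By definition $C$ is sequentially closed, and in a sequential space the sequentially closed subsets coincide with the closed ones; thus $C$ is automatically closed. The first part then applies and gives that $C$ is countably compact.
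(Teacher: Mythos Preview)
Your argument is correct and complete. The characterisation of countable compactness via cluster points holds in full generality (no separation axioms needed), and the passage from ``cluster point of a subsequence in $K$'' to ``cluster point of $s$ in $X$'' is unproblematic once one notes that cluster points in the subspace $K$ coincide with cluster points in $X$ that happen to lie in $K$.

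Your route, however, differs from the paper's. The paper argues directly with the open-cover definition: given a countable open cover $\{U_n\}$ of $C$ with no finite subcover, it passes to the increasing sequence $W_k=\bigcup_{i\le k}U_i$ and selects points $x_k$ in successive shells $W_{n_{k+1}}\setminus W_{n_k}$. Properness of the resulting sequence is then checked by hand: for any closed compact $L$, the set $L\cap C$ is covered by finitely many $W_k$, so the sequence meets it only finitely often. Your approach trades this explicit construction for the cluster-point characterisation, which makes the properness step cleaner (compact $\Rightarrow$ countably compact $\Rightarrow$ every sequence clusters), at the cost of invoking that equivalence as a black box. The paper's version is more self-contained and constructive; yours is shorter and perhaps more conceptual. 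Both reach the same contradiction with the s-compactness hypothesis in the same way.
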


\begin{proof}[Proof]

Suppose that an open cover $C=\bigcup _{n\in \mathbb{N}}U_n$ does
not admit any finite subcover. Then for all $k\in \mathbb{N}$
$$W_k=\bigcup _{i=1}^kU_i\varsubsetneq C,$$ and the sequence of
open subsets $W_1\subset W_2\subset\cdots\subset W_n\subset\cdots$
is such that for any $n$ there exists $k>n$ such that
$W_n\varsubsetneq W_k$. Set $n_1=1$ and pick a point $x_1\in W_1$;
next consider the smallest natural $n_2>n_1$ such that
$W_{n_1}\varsubsetneq W_{n_2}$ and pick $x_2\in W_{n_2}-W_{n_1}$.
Thus, coming from an inductive process, we obtain a strictly
increasing sequence of natural numbers $u\in\EE$, defined as
$u(k)=n_k,$ and a sequence $s:\NN\to X, s(k)=x_k$, such that
$s(1)\in W_1, s(k+1)\in W_{u(k+1)}-W_{u(k)}, k\in \NN$. This
sequence is proper; indeed, if $L$ is any closed compact subset of
$X$ then $K=L\cap C$ is a closed compact verifying that
$s^{-1}(K)=s^{-1}(L)$. But $K\subset C=\bigcup _{k=1}^{\infty
}W_k$ implies $K\subset \bigcup_{i=1}^mW_{k_i}=W_p$ for some $p$,
so $s^{-1}(K)\subset s^{-1}(W_p)=\{n_1,n_2,...,n_p\}$ is finite.
Therefore, $s$ is proper and $s^{-1}(C)$ must be finite. But
$s^{-1}(C)=\NN$, which is a contradiction.
\end{proof}

We obtain the following useful result.

\begin{Prop}
Let $X$ be an $S_2$-space and $C\subset X$. The following
statements are equivalent:
\begin{enumerate}
  \item[(i)] $C$ is s-compact.
  \item[(ii)] $C$ is countably compact.
  \item[(iii)] $C$ is sequentially compact.
\end{enumerate}
\end{Prop}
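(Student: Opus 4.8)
The plan is to prove the three conditions equivalent by establishing a cycle of implications. The statement is for $S_2$-spaces, so $X$ is sequential and sequentially Hausdorff; I want to exploit both hypotheses. Since $X$ is sequential, Lemma \ref{lemon} already gives me one arrow for free: every s-compact subset is countably compact, so (i)$\Rightarrow$(ii). The remaining work is to close the cycle, and the natural route is (ii)$\Rightarrow$(iii)$\Rightarrow$(i).

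For (iii)$\Rightarrow$(i), suppose $C$ is sequentially compact. I must show $C$ is s-compact, i.e.\ that $C$ is sequentially closed and that every proper sequence is eventually in $C^c$. Sequential closedness should follow from the $S_2$ hypothesis: a convergent sequence in $C$ has, by sequential compactness, a convergent subsequence whose limit is forced to agree with the original limit by sequential Hausdorffness, and that limit lies in $C$; so $C$ contains its sequential limit points. For the proper-sequence condition, I would argue by contradiction using Theorem \ref{sucprop}: a proper sequence in $X$ lies in $\neg\Sigma_c(X)$, so it has no convergent subsequence; but any subsequence landing infinitely often in the sequentially compact set $C$ would be forced to have a convergent subsequence, a contradiction. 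Hence a proper sequence can meet $C$ only finitely often, i.e.\ is eventually in $C^c$.

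For (ii)$\Rightarrow$(iii), suppose $C$ is countably compact; I want $C$ sequentially compact. The key tool here is Theorem \ref{Got} (the Gotchev--Minchev characterization), which says that for a $T_0$ space sequential compactness is equivalent to every countable sequentially open cover admitting a finite subcover. Since $X$ is sequentially Hausdorff it is $T_1$ (hence $T_0$) by the remark following the $S_2$ definition, and $C$ with the subspace topology inherits this, so Theorem \ref{Got} applies to $C$. The task then reduces to passing from ordinary countable compactness to the sequentially-open version of the covering property. Here I would use that $X$ is sequential: in a sequential space the sequentially open sets coincide with the open sets, and this property should descend to the subspace $C$ closely enough that a countable sequentially open cover of $C$ can be replaced by (or recognized as) a countable open cover, which by countable compactness has a finite subcover.

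I expect the main obstacle to be the (ii)$\Rightarrow$(iii) step, specifically the interplay between the ambient sequential structure and the subspace $C$. The subtlety is that subspaces of sequential spaces need not be sequential, so I cannot simply assert that sequentially open subsets of $C$ are open in $C$; I must be careful about whether the sequentially open cover is taken in $X$ or in $C$, and ensure that the Gotchev--Minchev criterion is being applied with the correct topology on $C$. Reconciling the covering hypothesis (countable compactness, phrased with open covers) against the sequential-open-cover formulation of Theorem \ref{Got} is the delicate point; the other two implications are comparatively direct once Theorem \ref{sucprop} and the $S_2$ hypotheses are invoked.
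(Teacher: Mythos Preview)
Your plan coincides with the paper's: it too obtains (i)$\Rightarrow$(ii) from Lemma \ref{lemon}, asserts the equivalence of (ii) and (iii) via Theorem \ref{Got}, and proves (iii)$\Rightarrow$(i) by exactly the argument you outline (sequential closedness from sequential Hausdorffness, and the proper-sequence condition by contradiction through Theorem \ref{sucprop}). Your handling of (iii)$\Rightarrow$(i) is correct and matches the paper verbatim in spirit.

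The concern you raise about (ii)$\Rightarrow$(iii) is real, and the paper does not address it either: its entire justification is the sentence ``By Theorem \ref{Got} (ii) and (iii) are equivalent.'' As you say, Theorem \ref{Got} applied to the subspace $C$ equates sequential compactness with the finite-subcover property for countable \emph{sequentially open} covers of $C$, and this agrees with countable compactness only when $C$ is itself sequential---which arbitrary subspaces of sequential spaces need not be. Your hope that sequentiality ``should descend to $C$ closely enough'' is not an argument. One clean way to close the gap is to show first that a countably compact $C$ in an $S_2$-space is \emph{closed}, hence sequential as a closed subspace of a sequential space, after which Theorem \ref{Got} applies honestly. For this, suppose $(x_n)\subset C$ converges to $x\notin C$; using sequential Hausdorffness one may take the $x_n$ distinct, and then for each $m$ the set $\{x_n:n\neq m\}\cup\{x\}$ is sequentially closed in $X$ (any sequence in it with no infinitely repeated value must converge to $x$), hence closed. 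Its complement is an open neighbourhood of $x_m$ meeting $\{x_n\}$ only in $x_m$, so no point of $C$ is an $\omega$-accumulation point of the infinite set $\{x_n\}$, contradicting countable compactness of the $T_1$ subspace $C$. So your diagnosis of the obstacle is exactly right; what is missing from both your sketch and the paper's one-line appeal to Theorem \ref{Got} is this intermediate step.
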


\begin{proof}[Proof]

By Theorem \ref{Got} (ii) and (iii) are equivalent; and by the
above lemma (i) implies (ii). Now we prove (iii) implies (i). Let
$C\subset X$ be a sequentially compact subset. Then $C$ is
sequentially closed since $X$ is sequentially Hausdorff. On the
other hand consider $s:\NN\to X$ any proper sequence. If
$s^{-1}(C)$ were infinite, then it would exist a subsequence
$s\circ u,$ $u\in\EE$ in $C$ and therefore a convergent
subsequence $s\circ u\circ v,$ $v\in\EE$, which is not possible by
Theorem \ref{sucprop}.
\end{proof}

\subsection{Brown's questions}

Given an open $U$ in a space $X$, we consider the set
$\Sigma_U(X)$ of sequences $s:\NN\to X$ such that $s\propto U$. It
is clear that  for any space $X$ with topology $\tau$, the family
$$\varepsilon=\{U\in\tau;\neg\Sigma_c(X)\subset\Sigma_U(X)\}$$
\noindent is a filter of open subsets. Now we consider the set
$X\cup\{\infty\}$ equipped with the topology
$\tau^{\wedge}=\tau\cup\varepsilon_\infty$, where
$\varepsilon_\infty=\{U\cup\{\infty\};U\in\varepsilon\}$. Thus we
get a space $X^{\wedge}$, which is the \textit{one-point
sequential compactification} defined by R. Brown \cite{Br}. If $X$
is already sequentially compact, then $\tau^{\wedge}$ is the
coproduct topology. Brown proves that: (1) $X^{\wedge }=X^+$ if
both $X$ and $X^+$ are $S_2$, and (2) $X^+$ is sequential if $X$
is first countable and a countable union of closed compact subsets
$K_i$ such that every compact subset is contained in some $K_i$.
Finally, he poses this problem: `find more general conditions for
$X^+$ to be sequential'.

Now we will give a satisfactory solution to this problem and
improve these earlier results. Namely, we will prove that the
statement `$X^+$ is sequential' is equivalent to `$X$ is
$\omega$-sequential'. First, we will see that, under the not very
restrictive condition of being $S_2$, the $\omega$-sequential
condition is equivalent to $X^{\wedge }=X^+$.

\begin{Th}\label{primcar}
An $S_2$-space $X$ is $\omega$-sequential if and only if
$X^\wedge=X^+$.
\end{Th}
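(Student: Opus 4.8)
The plan is to compare the two one-point topologies directly through the filters of open neighborhoods of $\infty$ that define them. Both $X^\wedge$ and $X^+$ have underlying set $X\cup\{\infty\}$ and both induce the original topology $\tau$ on $X$; in either case the open sets not containing $\infty$ are exactly those of $\tau$. Hence the two spaces coincide precisely when their open sets containing $\infty$ agree, that is, when $\varepsilon=\varepsilon_{cc}(X)$ (the neighborhoods of $\infty$ in $X^\wedge$ being $\{U\cup\{\infty\}:U\in\varepsilon\}$ and those in $X^+$ being $\{U\cup\{\infty\}:U\in\varepsilon_{cc}(X)\}$). So the whole theorem reduces to proving that, for an $S_2$-space, $\varepsilon=\varepsilon_{cc}(X)$ if and only if $X$ is $\omega$-sequential.

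The bridge between these two conditions is the family $\varepsilon_{sc}(X)$, and the first key step is to show $\varepsilon=\varepsilon_{sc}(X)$ for every $S_2$-space. For an open $U$ I would unwind both memberships to the same sequential statement: $U\in\varepsilon$ means every $s\in\neg\Sigma_c(X)$ satisfies $s\propto U$, while $U\in\varepsilon_{sc}(X)$ means $U^c$ is s-compact, i.e.\ $U^c$ is sequentially closed (automatic, since $U$ is open hence sequentially open) and every proper sequence is eventually in $U$. Since $X$ is $S_2$, Theorem \ref{sucprop} gives $\neg\Sigma_c(X)=\Sigma_p(X)$, so both conditions say exactly that every proper sequence is eventually in $U$. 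This is where the $S_2$ hypothesis is genuinely used, and I expect it to be the main point of the argument.

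It then remains to show that $\varepsilon_{sc}(X)=\varepsilon_{cc}(X)$ is equivalent to $\omega$-sequentiality. One inclusion is free: if $U^c$ is closed compact then $U^c$ is sequentially closed and every proper sequence is eventually in $U$, so $U^c$ is s-compact; thus $\varepsilon_{cc}(X)\subset\varepsilon_{sc}(X)$ always. For the converse, I would use that in a sequential space every s-compact set is closed (sequentially closed equals closed), so each s-compact $C$ can be written as $C=U^c$ with $U=C^c$ open and $U\in\varepsilon_{sc}(X)$; the inclusion $\varepsilon_{sc}(X)\subset\varepsilon_{cc}(X)$ then forces $C$ to be compact, and the reverse direction is immediate. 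Since $X$ is already sequential, and closed compact sets are always s-compact, the statement that every s-compact set is compact is exactly the requirement that the s-compact subsets coincide with the closed compact subsets, i.e.\ that $X$ is $\omega$-sequential.

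Chaining the three equivalences --- $X^\wedge=X^+\iff\varepsilon=\varepsilon_{cc}(X)$, then $\varepsilon=\varepsilon_{sc}(X)$, then $\varepsilon_{sc}(X)=\varepsilon_{cc}(X)\iff X$ is $\omega$-sequential --- yields the theorem. No delicate point-set estimates are needed; the only subtlety to watch is the careful bookkeeping of ``sequentially closed versus closed'' and ``compact versus closed compact'', which is harmless here because sequentiality collapses the first distinction and openness of $U$ makes $U^c$ automatically closed.
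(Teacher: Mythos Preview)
Your proposal is correct and follows essentially the same route as the paper: both use Theorem~\ref{sucprop} to identify $\neg\Sigma_c(X)$ with $\Sigma_p(X)$, which matches the filter $\varepsilon$ defining $X^\wedge$ with the complements of the s-compact subsets, so that $X^\wedge=X^+$ becomes precisely the statement that s-compact subsets coincide with closed compact subsets. The only difference is presentation---you route the argument explicitly through the equality of filters $\varepsilon=\varepsilon_{sc}(X)$ and $\varepsilon_{sc}(X)=\varepsilon_{cc}(X)$, whereas the paper phrases the same identifications in terms of the complementary s-compact versus closed compact subsets---but the mathematical content is identical.
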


\begin{proof}[Proof]

By Theorem \ref{sucprop}, $\Sigma_p(X)=\neg\Sigma_c(X)$. Hence the
s-compact subsets agree with the closed subsets $C$ such that
$\neg\Sigma_c(X)\subset\Sigma_U(X)$, where $U$ is the complement
of $C$ in $X$. When $X$ is $\omega$-sequential these subsets
clearly agree with the closed compact subsets of $X$, so
$X^{\wedge }=X^+$. Conversely, when $X^{\wedge }=X^+$ the
s-compact subsets agree with the closed compact subsets, that is,
$X$ is $\omega$-sequential.
\end{proof}

And now we give our characterization.

\begin{Th}\label{carac} A space $X$ is $\omega$-sequential if and
only if $X^+$ is sequential.
\end{Th}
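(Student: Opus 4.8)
The plan is to describe the sequentially open subsets of $X^+$ explicitly and to compare them with the genuinely open ones. Recall that $X$ is an open subspace of $X^+$ and that the open neighbourhoods of $\infty$ are exactly the sets $U\cup\{\infty\}$ with $U\in\varepsilon_{cc}(X)$, i.e. with $X\setminus U$ closed compact; thus a subset of $X^+$ is open precisely when it is open in $X$ (if it misses $\infty$) or of the form $U\cup\{\infty\}$ with $X\setminus U$ closed compact (if it contains $\infty$). Since open subsets are automatically sequentially open, the assertion ``$X^+$ is sequential'' reduces to showing that every sequentially open subset of $X^+$ has one of these two shapes, and I would split the whole analysis according to whether a subset of $X^+$ contains the point $\infty$.

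First I would treat the subsets $W\subset X$ not containing $\infty$. Because $X$ sits in $X^+$ as an open subspace, any sequence of $X^+$ converging to a point $x\in W\subset X$ is eventually in $X$ and converges to $x$ in $X$, while conversely every convergent sequence of $X$ is one of $X^+$. From this it follows directly that $W$ is sequentially open in $X^+$ if and only if it is sequentially open in $X$. Hence the requirement ``every sequentially open $W\subset X^+$ with $\infty\notin W$ is open'' is exactly the assertion that $X$ is a sequential space.

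The core of the argument is the second case, which I would isolate as a lemma: for $U\subset X$, the set $U\cup\{\infty\}$ is sequentially open in $X^+$ if and only if $C=X\setminus U$ is s-compact. For the forward implication, testing sequential openness against convergent sequences of $X$ with limit in $U$ shows that $U$ is sequentially open in $X$, so $C$ is sequentially closed, while testing it against proper sequences of $X$ --- which, as recalled in the excerpt, are precisely the sequences of $X^+$ converging to $\infty$ --- shows that every proper sequence is eventually in $U$; together these say $C$ is s-compact. The delicate direction is the converse: given a sequence $s$ of $X^+$ with limit point $\infty$, I would argue by contradiction that if $s$ were not eventually in $U\cup\{\infty\}$ then infinitely many of its terms would lie in $C$, and the corresponding subsequence, still converging to $\infty$ and lying entirely in $X$, would be a proper sequence of $X$ contained in $C$, contradicting the s-compactness of $C$. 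This extraction of a proper subsequence inside $C$ is the main obstacle, being the only step where the geometry of the one-point compactification genuinely enters.

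Finally I would assemble the pieces. One checks that a closed compact subset is always s-compact --- it is sequentially closed because it is closed, and a proper sequence meets a closed compact set only finitely often --- so the agreement demanded in the definition of $\omega$-sequentiality reduces to the single inclusion that every s-compact subset is closed compact. Since $U\cup\{\infty\}$ is open in $X^+$ exactly when $X\setminus U$ is closed compact, the lemma converts the requirement ``every sequentially open neighbourhood of $\infty$ is open'' into precisely this inclusion. Combining with the first case, $X^+$ is sequential if and only if $X$ is sequential and its s-compact subsets coincide with its closed compact subsets, which is exactly the definition of $X$ being $\omega$-sequential.
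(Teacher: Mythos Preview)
Your proof is correct and follows essentially the same route as the paper's: both split sequentially open subsets of $X^+$ according to whether they contain $\infty$, both identify the second case with the s-compactness of the complement, and both handle the delicate point (a sequence converging to $\infty$ need not lie in $X$) by extracting a subsequence in $X$ that is then proper. The only cosmetic differences are that you package the second case as a stand-alone lemma and argue by contradiction where the paper argues directly; one small omission is that in the converse of your lemma you should also say a word about sequences with limit in $U$, but this is immediate from $C$ being sequentially closed together with your Case~1 observation.
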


\begin{proof}[Proof]

Suppose that $X$ is $\omega $-sequential and let $V\subset X^+$ be
any sequentially open subset. Then we must prove that $V$ is an
open subset of $X^+$. It is clear that $U=V-\{\infty\}\subset X$
is sequentially open in $X$ and therefore $U$ is an open subset in
$X$. If $\infty\notin V$ then $V=U$ is also open en $X^+$. Now
assume that $\infty\in V$; taking into account that a sequence in
$X$ is proper if and only if it converges to $\infty $ in $X^+$ we
have that $\Sigma_p(X)\subset\Sigma_U(X)$. Hence, the complement
$C$ of $U$ in $X$ is s-compact. By the hypothesis on $X$, $C$ is
closed compact, so $V\subset X^+$ is open.

Conversely, suppose that $X^+$ is a sequential space. Since $X$ is
open in $X^+$ then $X$ is also a sequential space. Now, we will
check that every s-compact subset $C$ is closed compact. If $U$ is
the complement of $C$ in $X$ and $V=U\cup\{\infty\}$, we must
prove that $V$ is open, that is, $V$ is a sequentially open subset
of $X^+$. Consider a sequence $s:\NN\to X^+$ such that $s\to x\in
V$; then we have $s\propto V$. Indeed, if $x\neq\infty$ or $s$ is
eventually constant at $\infty$, then the condition $s\propto V$
is clear. Otherwise, the complement $A$ of $s^{-1}(\infty)$ in
$\NN$ is infinite, and we can take the map $u\in\EE$ enumerating
$A$, so $t=s\circ u$ is a subsequence of $s$ contained in $X$.
Moreover $t$ is proper, so $t\propto U$ because $C$ is s-compact.
This fact implies that $s\propto V$.
\end{proof}

\begin{remark} As a consequence of the above theorem, if
$\mathbf{Pseq}_{lcH}$ denotes the full subcategory of
$\mathbf{P}_{lcH}$ whose objects are $\omega $-sequential spaces
and $\mathbf{Seq}^{\infty}_{cH}$ the full subcategory of
$\mathbf{Top}^{\infty }_{cH}$ whose objects are sequential spaces
we have, by Proposition \ref{equi}, an equivalence of categories
$$(-)^+:\mathbf{Pseq}_{lcH}\stackrel{\simeq
}{\longrightarrow } \mathbf{Seq}^{\infty}_{cH}.$$
\end{remark}

\subsection{Examples of $\omega $-sequential spaces.}

We finish this section giving some examples of $\omega$-sequential
spaces. For the topological notions and relations involved in the
next proposition see for instance \cite{Dug}.

\begin{Prop}
The following spaces are $\omega$-sequential:
\begin{enumerate}
\item[(i)] Sequential and paracompact spaces.

\item[(ii)] Sequential Lindel\"{o}f spaces.

\item[(iii)] Hausdorff and second countable spaces.
\end{enumerate}
\end{Prop}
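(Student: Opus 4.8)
The plan is to show each of the three classes of spaces is $\omega$-sequential by invoking the characterization from Theorem~\ref{carac}: a space $X$ is $\omega$-sequential if and only if its Alexandroff compactification $X^+$ is sequential. Since each hypothesis already includes or implies that $X$ itself is sequential, the real work in each case is to transfer sequentiality up to $X^+$. An alternative, which I expect to be cleaner, is to argue directly from Definition~\ref{omega}: verify that $X$ is sequential and that every s-compact subset is closed compact. By Lemma~\ref{lemon}, in a sequential space every s-compact subset is automatically countably compact, so in each case it will suffice to promote ``closed and countably compact'' to ``closed and compact,'' which is exactly where the separation/covering hypotheses enter.

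For case (i), sequential and paracompact, I would first recall that an s-compact subset $C$ is sequentially closed, hence closed since $X$ is sequential, and countably compact by Lemma~\ref{lemon}. A closed subset of a paracompact space is paracompact, and a paracompact countably compact space is compact (a paracompact space is countably compact if and only if it is compact, via the standard fact that paracompactness plus countable compactness yields a locally finite refinement of any open cover that must be finite). Thus $C$ is closed compact, giving $\omega$-sequentiality. For case (ii), sequential and Lindel\"of, the same opening applies: $C$ is closed and countably compact, and a closed subset of a Lindel\"of space is Lindel\"of, while a Lindel\"of countably compact space is compact (every open cover has a countable subcover, which by countable compactness has a finite subcover). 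Hence $C$ is compact.

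For case (iii), Hausdorff and second countable, I would first note that second countable implies first countable implies sequential, and Hausdorff gives the separation needed; in fact second countable implies Lindel\"of, so this case reduces to the argument of (ii) once one checks the space is sequential. Being Hausdorff also ensures compact subsets are closed, smoothing the identification of s-compact with closed compact subsets. So the three cases share a common skeleton, differing only in which classical theorem upgrades countable compactness to compactness.

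The main obstacle, and the only step requiring genuine care, is the implication ``closed $+$ countably compact $\Rightarrow$ compact'' under each covering hypothesis; everything else (sequential closedness becoming closedness via sequentiality, and Lemma~\ref{lemon} supplying countable compactness) is immediate from the results already established. One should double-check the direction of these classical equivalences and that ``closed subspace inherits paracompactness/Lindel\"ofness,'' both of which are standard and may simply be cited from \cite{Dug}. I would also verify at the outset of each case that $X$ is indeed sequential, since $\omega$-sequentiality requires it by definition; in (i) and (ii) it is assumed outright, and in (iii) it follows from first countability.
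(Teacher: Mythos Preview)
Your proposal is correct and follows essentially the same route as the paper: work directly from Definition~\ref{omega}, use Lemma~\ref{lemon} to obtain countable compactness of an s-compact (hence closed) subset, and then upgrade countable compactness to compactness via the inherited paracompactness or Lindel\"ofness, reducing (iii) to (ii) through second countable $\Rightarrow$ Lindel\"of and first countable $\Rightarrow$ sequential. The paper's proof is virtually identical, only more terse.
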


\begin{proof}[Proof]

(i) Consider $C$ any s-compact subspace of $X$. Since $X$ is
sequential $C$ is closed, so it is also paracompact. On the other
hand $C$ is countably compact by Lemma \ref{lemon}. But every
paracompact countably compact space is compact.

(ii) Similarly, if $C$ is any s-compact subset, we have that $C$
is closed and therefore Lindel\"{o}f. Being $C$ countably compact
and Lindel\"{o}f, $C$ is compact.

(iii) Every second countable Hausdorff space is Lindel\"{o}f and
first countable (in particular sequential). Hence we may apply
(ii). \end{proof}

From the above proposition it follows that CW-complexes, metric
spaces, and usual topological manifolds are $\omega$-sequential
spaces. We can find other examples in the spaces $X$ considered by
R. Brown (first countable spaces which are countable union of
closed compact subsets $K_i$ such that every compact is contained
in some $K_i$). In this case $X^+$ is first countable and we may
apply Theorem \ref{carac}. The spaces studied by R. Brown are
$\sigma$-compact but CW-complexes are examples of
$\omega$-sequential spaces which are not necessarily
$\sigma$-compact.

The Hausdorff, locally compact and $\sigma$-compact spaces
considered by Edwards and Hastings in \cite{E-H} are also $\omega
$-sequential if they are also first countable.

Now we give an example of sequential space which is not
$\omega$-sequential. Consider $X=[0,\Omega)$ the ordinal space
\cite{Dug}, where $\Omega$ is the first non countable ordinal.
Then $X$ is first countable and Hausdorff and, in particular, it
is an $S_2$-space. On the other hand $X$ is sequentially compact
so $X^\wedge$ is the coproduct of $X$ with $\{\infty\}.$ Then
$X^\wedge\neq X^+$, since $X$ is not compact. By Theorem
\ref{primcar} we conclude that $X$ is not $\omega$-sequential.


\section{Sequentiality in exterior spaces}

As we have commented in the introduction, the category
$\mathbf{P}$ of spaces and proper maps does not have good
categorical properties so several classical sequential results
cannot be translated to $\mathbf{P}.$ Recall the embedding
$(-)^+:\mathbf{P}\hookrightarrow\mathbf{Top^{\infty}}$ given in
Proposition \ref{equi}. Now we will define a category equivalent
to $\mathbf{Top^{\infty}}$, which is the category {\bf E} of
exterior spaces and maps. This category, which was firstly
presented in \cite{G-G-H} and \cite{G-H}, contains the proper
category and is complete and cocomplete.  It is for this reason
that we complete our analysis considering the category of exterior
spaces.

\subsection{Exterior spaces}

Now we will provide some necessary background about exterior
spaces. For a detailed and ampler vision of this topic,
\cite{G-G-H} and \cite{G-H} can be consulted.

An \emph{externology} in a space $(X,\tau)$ is a nonempty
subfamily $\varepsilon\subset \tau$ which is a filter of open
subsets. This means that $\varepsilon $ is closed under finite
intersections and $U\in \tau,$ $E\in \varepsilon $, $U\supset E$
implies $U\in \varepsilon $. Note that $\varepsilon$ is a filter
in the lattice $\tau$, not a filter on the set $X$ as in set
theory. So in our algebraic sense, $\tau$ is the maximal filter of
open subsets, and $\{X\}$ is the minimal filter of open subsets
(see \cite{Jo}). An {\it exterior space}
$(X,\varepsilon\subset\tau )$ consists of a topological space
together with an externology. The elements of $\varepsilon$ are
called {\it exterior-open} subsets or, in short, {\it e-open}
subsets.

Given an space $X=(X,\tau)$, we may consider the \textit{discrete
exterior space} $X_d=(X,\varepsilon=\tau)$, and the
\textit{indiscrete exterior space} $X_i=(X,\{X\}\subset\tau)$. An
exterior space $(X,\varepsilon\subset\tau )$ is discrete if and
only the empty set is contained in the filter $\varepsilon $.

A function between exterior spaces,
$f:(X,\varepsilon\subset\tau)\to (X',\varepsilon'\subset\tau')$,
is said to be an {\it exterior map} if it is continuous and
$$f^{-1}(E)\in \varepsilon ,\;\forall E\in \varepsilon'.$$
The category of exterior spaces and exterior maps will be denoted
by {\bf E}.

The externology $\varepsilon _{cc}$ constituted by the complements
of the closed compact subsets (see Section 2) is called the {\it
cocompact externology}. The exterior space
$$X_{cc}=(X,\varepsilon _{cc}\subset\tau)$$ is a topological space
enriched with a system of open neighborhoods at infinity. It is
clear that we have a full and faithful functor
$$(-)_{cc}:{\bf P}\hookrightarrow {\bf E}$$ The category
{\bf E} has better properties than {\bf P}. In particular it has
limits and colimits given respectively by final and initial
structures with respect to the forgetful functor
$O:\mathbf{E}\to\mathbf{Top}$, which has two adjoints:
$(-)_d\,\dashv\,O\,\dashv\,(-)_i:\mathbf{Top}\to\mathbf{E}$.

\begin{remark}
We also note that $\emptyset$ ($\emptyset_d=\emptyset_i$) is
the initial object in {\bf E}, and $\mathbf{1}_i$ is the final
object. It is clear that $\mathbf{1}_i$ and $\mathbf{1}_d$ are
non-isomorphic exterior spaces. Every exterior map
$\mathbf{1}_d\to X$ can be considered as an element of $X$, but an
exterior map $\mathbf{1}_i\to X$ is an element which belongs to
all the open subsets of the filter.

Let us call \emph{limit} of $(X,\varepsilon\subset\tau)$ the
intersection $\ell(X)$ of all open subsets of the filter. Elements
of $\ell(X)$ are \emph{limit points} of the exterior space. For
every subset $A\subset X$, the space $X$ endowed with the filter
$\mathcal{U}(A)=\{U\in\tau;A\subset U\}$ is an exterior space such
that $\ell(X)=A$. It is clear that $\ell(X_d)=\emptyset$, but also
$\ell(\NN)=\emptyset$, where $\mathbb{N}$ is the discrete space
with the cofinite externology $\varepsilon_{cc}$. For every
exterior map $f:X\to Y$ we have $f(\ell(X))\subset\ell(Y)$. In
particular, every exterior map $X_i\to Y$ must have its image
contained in $\ell(Y)$. Thus
$\;(-)_i\,\dashv\,\ell:\mathbf{E}\to\mathbf{Top}$, where we
consider $\ell(Y)$ as an indiscrete space.
\end{remark}

If $X$ is an exterior space, an \emph{exterior sequence} is an
exterior map $\mathbb{N}\rightarrow X$, where $\mathbb{N}$ is the
discrete space with the externology $\varepsilon _{cc}$, that is,
the cofinite externology. In other words, a sequence
$s:\mathbb{N}\rightarrow X$ is exterior if
$$s\propto U,\;\; \forall U\in\varepsilon.$$ The set of all
exterior sequences $\mathbb{N}\rightarrow X$ will be denoted by
$\Sigma_e(X)$. In particular, $\EE$ is a submonoid of
$\Sigma_e(\NN)=\Sigma_p(\NN)$, so $\Sigma_e(X)$ is an $\EE$-subset
of $X^\NN$. We also note that Theorem \ref{sucprop} is a very
particular case, because there are exterior spaces satisfying
$\Sigma_e(X)\subset\Sigma_c(X)$. Take for instance the filter of
open subsets $\mathcal{U}(\{x\}),\;x\in X$.

\subsection{Sequentially exterior maps, and e-sequential exterior spaces}

Now we analyze the sequentiality of exterior spaces and maps,
extending the sequentiality of spaces and proper maps. The next
definition and theorem are analogous to Definition \ref{seprop}
and Theorem \ref{equiva-prop} respectively. Now we formulate them
in the general exterior scope.

\begin{Df}
A map between exterior spaces $f:X\to Y$ is said to be
\emph{sequentially exterior} or \emph{e-sequential} if it is
sequentially continuous and it preserves exterior sequences, that
is,
\begin{enumerate}
\item
$f\circ s\in \Sigma_c^+(Y),$ for all $s\in \Sigma_c^+(X);$ and

\item
$f\circ s\in \Sigma_e(Y),$ for all $s\in \Sigma_e(X).$
\end{enumerate}
In other words $f\circ\Sigma_c^+(X)\subset\Sigma_c^+(Y)
\;\;\text{and}\;\; f\circ\Sigma_e(X)\subset\Sigma_e(Y).$
\end{Df}

If $(X,\varepsilon _{X}\subset \tau _{X})$ is an exterior space
and $\infty$ is a point which does not belong to $X$ then we
consider the based space $X^\infty=X\cup\{\infty\}$ with base
point $\infty$, and topology $\tau^\infty=\tau _{X}\cup
\{E\cup\{\infty\}:E\in \varepsilon _{X}\}$. In this way the
canonical inclusion $X\hookrightarrow X^\infty$ is a homeomorphism
onto its image. Notice that Brown's construction $X^\wedge$ in
Section 3 is a particular case of $X^\infty$.

If $f:X\to X'$ is an exterior map, we define $f^{\infty}:X^\infty
\rightarrow {X'}^\infty$ by $f^{\infty}(x)=f(x)$ when $x\in X$ and
$f^{\infty }(\infty )=\infty$. Thus we obtain a functor
$$(-)^{\infty }:{\bf E}\rightarrow {\bf Top}^\infty,$$ which is an
equivalence of categories. Indeed, an easy verification shows that
it is full and faithful. On the other hand, the quasi-inverse is
described as follows: Let $(X,x_{0})$ be an object in ${\bf
Top}^\infty;$ then consider $\bar{X}=X\mbox{-}\{x_{0}\}$ equipped
with
$$\tau_{\bar{X}}=\{A\mbox{-}\{x_{0}\}:A\in \tau _{X}\}\subset \tau
_X,\;\;\varepsilon_{\bar{X}}=\{A\mbox{-}\{x_{0}\}:A\in \tau
_{X},x_{0}\in A\}.$$ Then $(\bar{X}^\infty,\infty )\cong
(X,x_{0})$ in ${\bf Top}^\infty$ by means of
$$\alpha:(\bar{X}^{\infty },\infty )\rightarrow (X,x_{0}),\;\;\alpha (x)=\left\{
\begin{array}{ll} x, & \mbox{if}\hspace{5pt}x\in \bar{X} \\
{\infty ,} & \mbox{if}\hspace{5pt}x=x_0
\end{array}\right.$$
As a consequence, the category ${\bf Top^{\infty }}$ is complete
and cocomplete. The initial object is $(\mathbf{1},0)$ and the
final object is $(\mathbf{2}_S,0)$. It is also clear that we have
a commutative diagram
$$\xymatrix{
  {\mathbf{P}} \ar[dr]_{(-)^+} \ar[r]^{(-)_{cc}}
                & {\mathbf{E}} \ar[d]^{(-)^\infty}_{\simeq }  \\
                & {\mathbf{Top}^\infty}  }$$

 The next result gives us a relationship between the
sequentially exterior maps and the sequential continuous maps when
we consider the functor $(-)^\infty$. We leave the proof to the
reader, since it is analogous to that given in the previous
section.

\begin{Th}
Let $f:X\to Y$ be a map between exterior spaces. Then $f$ is
e-sequential if and only if $f^\infty:X^\infty\to Y^\infty$ is
sequentially continuous.
\end{Th}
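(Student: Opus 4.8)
The plan is to decompose the sequential continuity of $f^\infty$ according to the limit point of the convergent sequence, which is either a point of $X$ or the added point $\infty$, and to match these two cases with the two clauses in the definition of an e-sequential map. First I would record two elementary facts about the construction $X^\infty$. Since $\tau_X\subset\tau^\infty$, the subset $X$ is open in $X^\infty$ and the inclusion $X\hookrightarrow X^\infty$ is a homeomorphism onto its (open) image carrying $\tau_X$; moreover the open neighborhoods of $\infty$ are exactly the sets $E\cup\{\infty\}$ with $E\in\varepsilon_X$. Consequently a sequence $s:\NN\to X^\infty$ satisfies $s\to\infty$ if and only if $s\propto E\cup\{\infty\}$ for every $E\in\varepsilon_X$; when $s$ takes values in $X$ this says precisely that $s$ is an exterior sequence, so $\Sigma_e(X)$ is exactly the set of sequences $\NN\to X\subset X^\infty$ converging to $\infty$.

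For the convergences with limit in $X$ I would argue that they encode nothing more than sequential continuity of $f$, exactly as in the proof of Theorem \ref{equiva-prop}. If $s\in\Sigma_c^+(X^\infty)$ has limit $x\in X$, then, $X$ being an open neighborhood of $x$, $s$ is eventually in $X$ and its tail is a convergent sequence of $X$ (with limit $x$) in the sense of $\tau_X$; applying clause (1) to this tail and using $f^\infty(x)=f(x)\in Y$ together with the openness of $Y$ in $Y^\infty$ shows $f^\infty\circ s\to f(x)$, the finitely many initial $\infty$-values being irrelevant to convergence. The converse embedding of a convergent sequence of $X$ into $X^\infty$ gives the reverse implication, so clause (1) is equivalent to the statement that $f^\infty$ preserves all convergent sequences with limit in $X$.

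For convergences to $\infty$ I would show equivalence with clause (2). Given $s\in\Sigma_c^+(X^\infty)$ with $s(\infty)=\infty$, set $A=s^{-1}(X)$. If $A$ is finite then $s$ is eventually $\infty$, whence $f^\infty\circ s$ is eventually $\infty$ and converges to $\infty$ trivially. If $A$ is infinite I would take $u\in\EE$ enumerating $A$; then $t=s\circ u$ is a sequence $\NN\to X$, it is exterior because $t^{-1}(X\setminus E)=u^{-1}(s^{-1}(X\setminus E))$ is finite for every $E\in\varepsilon_X$, and clause (2) makes $f\circ t$ exterior. Transporting this back along $u$ shows $(f^\infty\circ s)^{-1}(Y\setminus F)$ is finite for every $F\in\varepsilon_Y$, i.e. $f^\infty\circ s\to\infty$. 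Conversely, viewing an exterior sequence of $X$ as a sequence converging to $\infty$ in $X^\infty$ and applying sequential continuity of $f^\infty$ returns clause (2). Combining the two cases, $f$ is e-sequential precisely when $f^\infty$ preserves every convergent sequence, i.e. is sequentially continuous. The main obstacle is the bookkeeping for sequences that oscillate between $X$ and $\infty$; this is handled uniformly by passing to the subsequence enumerating $s^{-1}(X)$, after which the dropped $\infty$-terms affect neither the hypothesis nor the conclusion.
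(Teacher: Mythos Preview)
Your proof is correct and follows exactly the approach the paper intends: the paper omits the proof, referring back to Theorem~\ref{equiva-prop}, whose argument is precisely the case split on whether the limit of the convergent sequence lies in $X$ or equals $\infty$, matching these with clauses (1) and (2) respectively. You have simply supplied the details (handling sequences that hit $\infty$ on a finite or cofinite set) that both the paper and Theorem~\ref{equiva-prop} leave to the reader.
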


As in the proper case we want to find a suitable class of exterior
spaces in which exterior maps and sequentially exterior maps
agree. Now we will give a notion which corresponds in the proper
case to the complements of s-compacts. In fact, Definition
\ref{omega} and Proposition \ref{omega2} have a natural extension
to the exterior setting.

\begin{Df}\label{eseq}
Let $X$ be an exterior space.
\begin{enumerate}
\item[(i)] We say that $E\subset X$ is a \emph{sequentially e-open}
 subset if it is a sequentially open subset and every exterior
sequence is eventually in $E$.

\item[(ii)] $X$ is said to be \emph{e-sequential} if it is a
sequential space and every sequentially e-open subset is e-open.
\end{enumerate}
\end{Df}

Given a space $X$, we have the family of subsets
$$\varepsilon_{seq}(X)=\{U\subset X;U\; \text{is sequentially e-open}\}.$$
Obviously, when $X$ is an e-sequential space then
$\varepsilon_{seq}(X)$ is a filter of open subsets.

\begin{remark}
Let us recall that an \emph{exterior set} (see \cite{G-H-R})
is an exterior space with discrete topology. The category
$\mathbf{ESet}$ of exterior sets is a full subcategory of
$\mathbf{E}$. Given an exterior set $(X,\varepsilon)$, the final
filter on $X$ with respect to $\Sigma_e(X)$ is
$$\bar{\varepsilon}=\{U\subset X;s\propto U,\forall
s\in\Sigma_e(X)\}\;.$$ It is clear that
$\varepsilon\subset\bar{\varepsilon}$, and, following Definition
\ref{eseq}, an exterior set is e-sequential if
$\varepsilon=\bar{\varepsilon}$. This particular definition was
used in \cite{E-G-M}.
\end{remark}

We obtain the following immediate result:

\begin{Prop}\label{caracext}
Let $f:X\rightarrow Y$ be a map between exterior spaces. Then
\begin{enumerate}
\item[(i)] $f:X\rightarrow Y$ is e-sequential if and only if it is
sequentially continuous and $f^{-1}(V)\in\varepsilon_{seq}(X),
\;\forall V\in\varepsilon_{seq}(Y).$

\item[(ii)] Suppose that $X$ is e-sequential. Then $f$ is exterior if
and only if $f$ is e-sequential.
\end{enumerate}
\end{Prop}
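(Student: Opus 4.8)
The plan is to prove the two parts of Proposition \ref{caracext} by directly translating the definitions, exactly as one does for the analogous Proposition \ref{omega2} in the proper case. For part (i), I would argue both implications in turn. Recall that $f$ is e-sequential means $f$ is sequentially continuous and $f\circ\Sigma_e(X)\subset\Sigma_e(Y)$, while the condition $f^{-1}(V)\in\varepsilon_{seq}(X)$ for all $V\in\varepsilon_{seq}(Y)$ is a statement about sequentially e-open subsets. The key is the elementary observation that for a sequentially continuous map $f$, the preimage $f^{-1}(V)$ of a sequentially open $V$ is always sequentially open; so the content of the filter condition is really about the exterior part of the definition of $\varepsilon_{seq}$, namely the ``every exterior sequence is eventually in'' clause.

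For the forward direction of (i), assume $f$ is e-sequential and take $V\in\varepsilon_{seq}(Y)$. Since $V$ is sequentially open and $f$ is sequentially continuous, $f^{-1}(V)$ is sequentially open. To see that every exterior sequence $s\in\Sigma_e(X)$ is eventually in $f^{-1}(V)$, note that $f\circ s\in\Sigma_e(Y)$ because $f$ preserves exterior sequences; since $V$ is sequentially e-open we get $f\circ s\propto V$, which is precisely $s\propto f^{-1}(V)$. Hence $f^{-1}(V)\in\varepsilon_{seq}(X)$. For the converse, assume the filter condition holds; sequential continuity is part of the hypothesis, so it only remains to check $f\circ\Sigma_e(X)\subset\Sigma_e(Y)$. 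Given $s\in\Sigma_e(X)$ and an e-open $V\in\varepsilon_Y\subset\varepsilon_{seq}(Y)$, the hypothesis gives $f^{-1}(V)\in\varepsilon_{seq}(X)$, so $s\propto f^{-1}(V)$, i.e. $f\circ s\propto V$; as $V$ ranges over $\varepsilon_Y$ this says exactly $f\circ s\in\Sigma_e(Y)$.

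For part (ii), I would use that $X$ being e-sequential means $\varepsilon_{seq}(X)=\varepsilon_X$, i.e. the sequentially e-open subsets coincide with the e-open subsets. Since every exterior map is evidently e-sequential (continuity gives sequential continuity, and the preimage condition $f^{-1}(E)\in\varepsilon_Y$ directly yields preservation of exterior sequences), only the nontrivial implication needs work: an e-sequential $f$ with $X$ e-sequential is exterior. Sequential continuity together with $X$ being a sequential space forces ordinary continuity in the usual way. For the externology condition, take $E\in\varepsilon_Y\subset\varepsilon_{seq}(Y)$; by part (i), $f^{-1}(E)\in\varepsilon_{seq}(X)$, and since $X$ is e-sequential we have $\varepsilon_{seq}(X)=\varepsilon_X$, so $f^{-1}(E)\in\varepsilon_X$, which is exactly the exterior-map condition.

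The only genuinely delicate point—the ``main obstacle,'' such as it is—is verifying that sequential continuity plus the sequential-space hypothesis on $X$ upgrades $f$ to a genuinely continuous map in part (ii): one must check that $f^{-1}$ of an open set of $Y$ is open in $X$, and this uses that open subsets of a sequential space are exactly the sequentially open ones, so it suffices to show $f^{-1}(\text{open})$ is sequentially open, which is precisely sequential continuity. Everything else is a routine unwinding of the definitions of $\Sigma_e$, $\varepsilon_{seq}$, and the eventual-containment relation $\propto$, entirely parallel to the proofs of Proposition \ref{omega2} and Theorem \ref{carac}; the authors rightly describe the result as immediate.
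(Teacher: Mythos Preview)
Your proof is correct and is precisely the straightforward unwinding of definitions that the paper has in mind when it calls the result ``immediate'' (no proof is given in the paper, just as for the analogous Proposition~\ref{omega2}). There is a harmless typo in your discussion of the easy direction of (ii): you write $f^{-1}(E)\in\varepsilon_Y$ where you mean $f^{-1}(E)\in\varepsilon_X$.
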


The next result is completely analogous to that of Theorem
\ref{carac} so its proof is omitted and left to the reader.

\begin{Th}\label{infty}
An exterior space $X$ is e-sequential if and only if $X^{\infty }$
is a sequential space.
\end{Th}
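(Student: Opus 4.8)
The plan is to mirror the proof of Theorem \ref{carac}, replacing proper sequences by exterior sequences, s-compact subsets by sequentially e-open ones, closed compact subsets by e-open subsets, and the Alexandroff compactification $X^+$ by $X^\infty$. Two preliminary observations do the real work. First, the canonical inclusion $X\hookrightarrow X^\infty$ is an open topological embedding (as noted after the definition of $X^\infty$), since $X\in\tau_X\subset\tau^\infty$ and the subspace topology on $X$ is $\tau_X$; consequently $X$ is sequentially open in $X^\infty$, so any sequence in $X^\infty$ converging to a point of $X$ is eventually in $X$. Second, a sequence $s:\NN\to X$ is exterior if and only if, viewed in $X^\infty$, it converges to $\infty$: indeed the open neighborhoods of $\infty$ are exactly the sets $E\cup\{\infty\}$ with $E\in\varepsilon$, so $s\to\infty$ means $s\propto E$ for every $E\in\varepsilon$, which is the definition of $\Sigma_e(X)$.

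For the forward implication, I assume $X$ is e-sequential and let $V\subset X^\infty$ be sequentially open; the goal is that $V$ is open. Put $U=V\setminus\{\infty\}$. Since $X$ is open in $X^\infty$, $U$ is sequentially open in $X$, hence open because $X$ is sequential. If $\infty\notin V$ then $V=U\in\tau_X\subset\tau^\infty$ and the claim holds. If $\infty\in V$, I claim $U$ is sequentially e-open: it is sequentially open, and for every $s\in\Sigma_e(X)$ the second observation gives $s\to\infty\in V$, so $s\propto V$; since $s$ takes values in $X$ this means $s\propto U$. E-sequentiality of $X$ then forces $U$ to be e-open, i.e. $U\in\varepsilon$, whence $V=U\cup\{\infty\}\in\tau^\infty$ is open.

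For the converse, I assume $X^\infty$ is sequential; then $X$, being an open subspace, is sequential. Given a sequentially e-open $E\subset X$, set $V=E\cup\{\infty\}$; to show $E$ is e-open it suffices to show $V$ is open in $X^\infty$, and since $X^\infty$ is sequential it suffices to show $V$ is sequentially open. So I take $s:\NN\to X^\infty$ with $s\to x\in V$ and split into cases. If $x\neq\infty$, then $x\in E$ and, as $X$ is open in $X^\infty$, $s$ is eventually in $X$; its tail converges to $x$ in $X$, so $s\propto E\subset V$ because $E$ is sequentially open in $X$. If $s$ is eventually constant at $\infty$, then trivially $s\propto V$. In the remaining case $x=\infty$ and $A=\NN\setminus s^{-1}(\infty)$ is infinite; enumerating $A$ by $u\in\EE$ yields a subsequence $t=s\circ u$ in $X$, which is exterior by the second observation, hence eventually in $E$ because $E$ is sequentially e-open. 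Translating this back shows $\{n\in A: s(n)\notin E\}$ is finite, so $s\propto V$. The main obstacle is precisely this last case: one must pass to the $X$-valued subsequence, recognize it as exterior, and then recombine the estimate with $s^{-1}(\infty)$ to conclude $s\propto V$ for the original sequence; the rest is routine bookkeeping with the definitions.
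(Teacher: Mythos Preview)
Your proof is correct and follows precisely the approach the paper intends: the paper omits the proof and explicitly declares it ``completely analogous to that of Theorem~\ref{carac}'', and you have carried out exactly that analogy, replacing proper sequences by exterior sequences, s-compact complements by sequentially e-open subsets, and $X^+$ by $X^\infty$. The two preliminary observations you isolate (that $X$ is open in $X^\infty$, and that exterior sequences are exactly those converging to $\infty$) are the right translations of the corresponding facts used in Theorem~\ref{carac}, and your case analysis in the converse direction matches the paper's treatment there line by line.
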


\begin{remark}If $\mathbf{Seq}^{\infty }$ denotes the full subcategory of
$\mathbf{Top}^{\infty }$ whose objects are sequential spaces, and
$\mathbf{Eseq}$ the full subcategory of $\mathbf{E}$ whose objects
are the e-sequential exterior spaces, then we have an equivalence
of categories $$(-)^{\infty}:\mathbf{Eseq}\stackrel{\simeq
}{\longrightarrow}\mathbf{Seq}^{\infty}.$$
\end{remark}

\medskip

As a new example of exterior spaces we consider the exterior
version of first countable spaces, which are sequential. After a
convenient definition, properties of exterior spaces become
properties of spaces.

\begin{Df} Let $X$ be an exterior space.
\begin{enumerate}
\item[(i)] An \textrm{exterior base} in $X$ is a nonempty
collection of e-open subsets $\beta\subset\varepsilon$ such that
for every e-open $E$ one can find $B\in\beta$ such that $B\subset
E$.

\item[(ii)] $X$ is said to be \textrm{e-first countable}
(or \textrm{first countable at infinity}) if it is first countable
and it has a countable exterior base.
\end{enumerate}
\end{Df}

\begin{Prop}\label{carinfty}${}$

\begin{enumerate}
\item[(i)] An exterior space $X$ is e-first countable if and only if
$X^{\infty }$ is a first countable space.

\item[(ii)] Every e-first countable exterior space $X$ is
e-sequential.
\end{enumerate}
\end{Prop}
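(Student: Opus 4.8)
The plan is to prove (i) by analyzing the neighborhood filters of $X^{\infty}$ one point at a time, and then obtain (ii) immediately by chaining part (i) with Theorem \ref{infty} and the classical fact that every first countable space is sequential.

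For part (i), I would separate the points of $X^{\infty}$ into the points $x\in X$ and the point $\infty$. Since the canonical inclusion $X\hookrightarrow X^{\infty}$ is a homeomorphism onto an open subspace, the open subsets of $X^{\infty}$ that are contained in $X$ are precisely the elements of $\tau_X$; hence the neighborhood filter of any $x\in X$ in $X^{\infty}$ has a base of sets lying in $\tau_X$, and $X^{\infty}$ is first countable at $x$ if and only if $X$ is first countable at $x$. Thus $X^{\infty}$ is first countable at every point of $X$ if and only if $X$ is first countable.

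The heart of the argument concerns the point $\infty$. Here I would first observe that the open subsets of $X^{\infty}$ containing $\infty$ are exactly the sets $E\cup\{\infty\}$ with $E\in\varepsilon$. This uses that $\varepsilon$ is a filter of open subsets, so it is closed under finite intersections and upward closed in $\tau$; the same remark confirms that $\tau^{\infty}$ is genuinely a topology. Consequently, a countable neighborhood base $\{E_n\cup\{\infty\}\}_{n\in\NN}$ at $\infty$ exists if and only if there is a countable family $\{E_n\}_{n\in\NN}\subset\varepsilon$ such that every $E\in\varepsilon$ contains some $E_n$, which is precisely the statement that $\{E_n\}$ is a countable exterior base of $X$. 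Combining the two cases, $X^{\infty}$ is first countable if and only if $X$ is first countable and admits a countable exterior base, that is, if and only if $X$ is e-first countable.

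For part (ii) I would simply chain the implications: if $X$ is e-first countable then $X^{\infty}$ is first countable by (i); every first countable space is sequential; and $X^{\infty}$ being sequential is equivalent to $X$ being e-sequential by Theorem \ref{infty}. The only point requiring genuine care is the identification of the open neighborhoods of $\infty$ with the externology $\varepsilon$, together with the verification that a countable cofinal family in $(\varepsilon,\supseteq)$ is exactly a countable exterior base; everything else reduces to the homeomorphic embedding of $X$, the standard implication ``first countable $\Rightarrow$ sequential'', and the previously established equivalence of Theorem \ref{infty}.
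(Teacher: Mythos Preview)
Your proposal is correct and matches the approach the paper itself indicates: the paper states Proposition~\ref{carinfty} without a detailed proof, and in the subsequent remark it explicitly notes that part~(ii) follows immediately from Theorem~\ref{infty} together with part~(i), which is precisely your chain of implications. Your argument for part~(i)---splitting into points $x\in X$ (handled via the open embedding $X\hookrightarrow X^{\infty}$) and the point $\infty$ (where open neighborhoods are exactly the sets $E\cup\{\infty\}$ with $E\in\varepsilon$)---is the natural verification the paper leaves to the reader.
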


\begin{remark}
Proposition \ref{carinfty}(ii) follows immediately from Theorem
\ref{infty} and Proposition \ref{carinfty}(i), but it is also easy
to prove it directly. In general, these exterior spaces are
simpler to handle. They are the equivalent, in the category of
pro-spaces, to the towers of spaces (see for example \cite{E-H}).
In addition, they play an important role in the theory of
sequential homology, which is defined in \cite{G-H}. As examples
of this nature we can mention the ones of the form $X_{cc},$ where
$X$ is a first countable, $\sigma$-compact, locally compact
Hausdorff space.

On the other hand if $\mathbf{E}_{efc}$ denotes the full
subcategory of $\mathbf{E}$ whose objects are e-first countable
exterior spaces, and $\mathbf{Top}^{\infty}_{fc}$ denotes the full
subcategory of $\mathbf{Top}^{\infty}$ whose objects are first
countable spaces, then we have an equivalence
$$(-)^{\infty}:\mathbf{E}_{efc}\stackrel{\simeq }{\longrightarrow }
\mathbf{Top}^{\infty}_{fc}.$$
\end{remark}

Naturally, $F\subset X$ is a {\it sequentially e-closed} subset if
its complement in $X$ is sequentially e-open. Every e-open
(e-closed) subset is sequentially e-open (e-closed) subset. Recall
that a sequential space $X$ has the final topology with respect to
its continuous maps $\mathbb{N}^+\to X$. In the same way, it is
clear that an e-sequential space $X$ has the final externology
with respect to its continuous maps $\mathbb{N}^+\to X$ and its
exterior sequences $\mathbb{N}\to X$.

Many properties of the e-sequential exterior spaces are analogous
to the properties of sequential spaces, and they are proved in a
similar way. For instance, $\mathbf{Eseq}$ is a coreflective
subcategory of $\mathbf{E}$. Recall that given an exterior space
$X=(X,\varepsilon\subset\tau)$, we can consider $\tau_{seq}$  and
$\varepsilon_{seq}=\{U\subset X;U\;\;\mbox{is sequentially
e-open}\}\subset \tau_{seq}.$ Then we have an e-sequential
exterior space $\sigma X=(X,\varepsilon_{seq}\subset\tau_{seq})$.
With $\sigma(f)=f$ we complete the definition of the coreflector
functor
$$\sigma:\mathbf{E}\to \mathbf{Eseq}.$$ The subcategory
$\mathbf{Eseq}$ has limits and colimits. The colimits are taken in
$\mathbf{E}$ and for limits in $\mathbf{Eseq}$ we take limits in
$\mathbf{E}$ and then we apply the coreflector functor. The counit
of the coreflection is the identity $\sigma X\to X$, and an
exterior space is e-sequential if and only if the counit is an
isomorphism. Hence:

\begin{Prop}Let $X$ be an exterior space. The following statements
are equivalent:
\begin{enumerate}
\item[(i)] $X$ is e-sequential.

\item[(ii)] For all map $f:X\to Y$ with $Y$ exterior space, $f$ is
exterior if and only if $f$ is e-sequential.
\end{enumerate}
\end{Prop}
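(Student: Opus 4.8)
The implication (i)$\Rightarrow$(ii) is exactly Proposition \ref{caracext}(ii), so all the content lies in (ii)$\Rightarrow$(i). To prove this I would exploit the coreflector $\sigma:\mathbf{E}\to\mathbf{Eseq}$ just described. Recall that $\sigma X=(X,\varepsilon_{seq}\subset\tau_{seq})$, that the counit $c:\sigma X\to X$ is the identity on underlying points and is an exterior map (because $\tau\subset\tau_{seq}$ and $\varepsilon\subset\varepsilon_{seq}$), and that $X$ is e-sequential precisely when $c$ is an isomorphism. The plan is therefore to show that the inverse set-map $j:X\to\sigma X$ is also exterior: since $j$ and $c$ are mutually inverse bijections of the underlying set, this makes $c$ an isomorphism in $\mathbf{E}$ and hence forces $X$ to be e-sequential.

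To obtain that $j$ is exterior I would invoke the hypothesis (ii) applied to $Y=\sigma X$ and $f=j$: it suffices to check that $j$ is e-sequential, for then the biconditional in (ii) yields that $j$ is exterior. Thus the crux is to verify that the identity map $j:X\to\sigma X$ is sequentially continuous and preserves exterior sequences, and this reduces to the claim that $X$ and $\sigma X$ have the same convergent sequences and the same exterior sequences, i.e. $\Sigma_c^+(X)=\Sigma_c^+(\sigma X)$ and $\Sigma_e(X)=\Sigma_e(\sigma X)$.

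The equality of convergent sequences is the familiar fact that passing to the sequential coreflection does not change convergence: since $\tau\subset\tau_{seq}$, any sequence convergent in $\sigma X$ is convergent in $X$, and conversely if $s\to x$ in $(X,\tau)$ then $s$ is eventually in every sequentially open neighbourhood of $x$, so $s\to x$ in $(X,\tau_{seq})$. For the exterior sequences, the inclusion $\varepsilon\subset\varepsilon_{seq}$ gives $\Sigma_e(\sigma X)\subset\Sigma_e(X)$, while if $s\in\Sigma_e(X)$ then, by the very definition of sequentially e-open subset, every member of $\varepsilon_{seq}$ eventually contains $s$, whence $s\in\Sigma_e(\sigma X)$. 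These two equalities make $j$ sequentially continuous and exterior-sequence preserving, hence e-sequential. By (ii) the map $j$ is then exterior, so $c$ is an isomorphism and $X$ is e-sequential. The main (and essentially the only) obstacle is this sequence-matching step; once it is in place, the coreflection machinery recorded just before the proposition delivers the conclusion.
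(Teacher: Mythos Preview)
Your proof is correct and follows precisely the approach the paper intends: the proposition is stated right after the sentence ``an exterior space is e-sequential if and only if the counit is an isomorphism. Hence:'', and no further proof is given. You have simply made explicit the argument the paper leaves to the reader, namely that the identity $j:X\to\sigma X$ is e-sequential (because $\Sigma_c^+$ and $\Sigma_e$ are unchanged by $\sigma$, essentially by definition of $\tau_{seq}$ and $\varepsilon_{seq}$), so hypothesis (ii) forces $j$ to be exterior and the counit to be an isomorphism.
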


\begin{remark}
The property of being e-sequential is not hereditary since
being sequential is not. Nevertheless, it is straightforward to
check that in an e-sequential exterior space the e-open and
e-closed subsets are e-sequential exterior spaces.

On the other hand the image of an e-sequential exterior space is
not necessarily e-sequential. Take for instance an identity map
$id_X:(X,\{X\}\subset\tau_d)\to (X,\{X\}\subset\tau )$ where
$\tau_d$ is the discrete topology and $\tau$ is any non-sequential
topology. However, an exterior quotient (that is, a quotient space
with the corresponding final externology) of any e-sequential
exterior space is an e-sequential exterior space.
\end{remark}

If $X$ is any space, an immediate verification shows that $X$ is
$\omega$-sequential if and only if $X_{cc}$ is e-sequential. Hence
we have a commutative diagram

$$\xymatrix{
 {\mathbf{Eseq}}\; \ar@{^{(}->}[r]   & {\mathbf{E}}   \\
 {\mathbf{Pseq}}\; \ar@{^{(}->}[r] \ar@{^{(}->}[u]^{(-)_{cc}}
 & {\mathbf{P}} \ar@{^{(}->}[u]_{(-)_{cc}}
 }$$

\noindent where $\mathbf{Pseq}$ denotes the full subcategory of
$\mathbf{P}$ whose objects are $\omega$-sequential spaces. Of
course, there are e-sequential exterior spaces which do not come
from any $\omega$-sequential space; for instance, a first
countable space $X$, provided with the indiscrete externology.

Finally, we give an example of space $X$ such that $\sigma
(X_{cc})$ does not belongs to $\mathbf{Pseq}$. Take the exterior
space $X_{cc}$, where $X=[0,\Omega)$ is the ordinal space
considered at the end of Section 3. It is clear that
$\sigma(X_{cc})$ is equal to $X$ as topological spaces, because
$X$ is sequential. On the other hand $\Sigma_p(X)=\emptyset$, so
$\sigma(X_{cc})$ has the discrete externology. Hence
$\sigma(X_{cc})$ is not $X_{cc}$ because $X$ is not compact.

\subsection{Sequential exterior spaces as sheaves}

We finish this paper showing a new extension of results from
topological spaces to exterior spaces. Namely we will prove that
the category $\mathbf{Eseq}$ is a full subcategory of certain
topos of sheaves. Subsequently, since
$(-)_{cc}:\mathbf{Pseq}\hookrightarrow \mathbf{Eseq}$ is a full
embedding, $\mathbf{Pseq}$ will also be a full subcategory of a
topos of sheaves.

Let $\CC$ be the subcategory of $\mathbf{E}$ defined by the
objects $1\equiv 1_d,$ $\NN^+\equiv \NN^+_d,$ $\NN\equiv \NN
_{cc}$ and the following morphisms:
$$
\xymatrix
         {
            & &  {\NN^+}  \ar@(r,u)[]_{u\in M^+}
              \ar@/_/[lld]_{!}  \ar[dd]^{c_n\in C}  \\
 1 \ar@(l,u)[]^{id}  \ar[rrd]_{n}    \;\ar@/_/[rru]_{n,\infty}        &  &  \\
                            & &  {\NN}  \ar@(u,r)[]^{u\in M}
         }
$$

\noindent where $C$ denotes the set of all constant maps
$c_n=n\circ !:\NN^+\to\NN$ (we shall also use $C$ for the constant
maps $\NN^+\rightarrow \NN^+$), and the monoids on the objects
$\NN^+, \NN$ are respectively $M^+=\Sigma_c^+(\NN^+)$ and
$M=\Sigma_e(\NN)$.

Let us denote $\mathbf{\CC Set}$ the category of presheaves over
$\CC$ or $\CC$-sets. Then, we may see each $\CC$-set as a
commutative diagram
$$
\xymatrix
         {
    & &  X_c  \ar@/^/[lld]^{ev_n,ev_\infty}
    \ar@(r,u)[]_{(-)\circ u,u\in M^+}   \\
 X \ar@(l,u)[]^{id}      \ar@/^/[rru]^{cte}        &  &  \\
    & &  X_e \ar[llu]^{ev_n}  \ar@(u,r)[]^{(-)\circ u,u\in
    M}  \ar[uu]_{(-)\circ c_n}
         }$$

\noindent where $X_c$ is an $M^+$-set, $X_e$ an $M$-set, and the
compositions with the formal constant and evaluation maps are
clear: $c_n=cte\circ ev_n$, $ev_n(s\circ u)=ev_{u(n)}(s)$,
etcetera.  Briefly, we shall denote a $\CC$-set $P$ as a triple
$$P=(X,X_c,X_e)$$ \noindent where the morphisms can be forgotten
in the notation. Given another $\CC$-set, $Q=(Y,Y_c,Y_e)$, a
$\CC$-map $\phi:P\to Q$ is given by a triple $\phi=(f,f_c,f_e)$,
where $f:X\to Y$ is a map, $f_c:X_c\to Y_c$ an $M^+$-map
(equivariant), $f_e:X_e\to Y_e$ an $M$-map, and the compositions
with constant and evaluation maps are satisfied.

\medskip

It is clear that a functor $\Sigma:\mathbf{E}\to\mathbf{\CC Set}$
is defined by $\Sigma(X)=\mathbf{E}(-,X).$ Briefly,
$$\Sigma(X)=(X,\Sigma_c^+(X),\Sigma_e(X)),$$ and if $f:X\to Y$ is
an exterior map, then the natural transformation
$\Sigma(f):\Sigma(X)\to\Sigma(Y)$ is formed by $f$,
$f_c=f\circ(-):\Sigma_c^+(X)\to\Sigma_c^+(Y)$, and
$f_e=f\circ(-):\Sigma_e(X)\to\Sigma_e(Y)$. Note that the functor
$\Sigma_c^+(X)$ was used in \cite{J} to embed $\mathbf{Seq}$ in
the topos $\mathbf{M^+Set}$ of  $M^+$-sets, and
$\Sigma_e:\mathbf{E}\to \mathbf{MSet}$ is a construction used in
\cite{G-H-R}.

If we consider the Yoneda embedding $y:\CC\hookrightarrow
\mathbf{\CC Set}$ and the representable $\CC$-sets we get
$y(1)=\Sigma(1_d)=(1,1,\emptyset)$,
$y(\NN^+)=\Sigma(\NN^+_d)=(\NN^+,M^+,\emptyset)$, and
$y(\NN)=\Sigma(\NN)=(\NN,C,M)$. That is, the Yoneda functor is the
restriction of $\Sigma$ to $\CC$. For instance, the diagram of
$y(\NN)$ is, with $\NN\cong C$:
$$
\xymatrix{
    & &  C  \ar@/^/[lld]  \ar@(r,u)[]_{(-)\circ u,u\in M^+}   \\
 {\NN} \ar@(l,u)[]^{id}      \ar@/^/[rru]        &  &  \\
    & & M \ar[llu]^{ev_n}  \ar@(u,r)[]^{(-)\circ u,u\in M}  \ar[uu]_{(-)\circ c_n}
         }$$

We point out that since $\mathbf{E}$ has colimits and
$\Sigma(X)(A)=\mathbf{E}(A,X)$ for any object $A$ of $\CC$, then
$\Sigma$ has a left adjoint by the universal property of the
categories of presheaves. Moreover all the three objects of $\CC$
are e-sequential, hence the last remark is also true when $\Sigma$
is defined on $\mathbf{Eseq}$.

\medskip

Actually, the functor $\Sigma$ has a better behavior when its
domain is restricted to $\mathbf{Eseq}$. In fact, we have the
following result:

\begin{Prop}\label{embed1}
The functor $\Sigma:\mathbf{Eseq}\to\mathbf{\CC Set}$ is full,
faithful and injective on objects.
\end{Prop}

\begin{proof}[Proof]
It is clear that $\Sigma$ is faithful, and it is injective on
objects because $\mathbf{Eseq}$ is a coreflective subcategory of
$\mathbf{E}$. Finally, given a natural transformation
$(f,f_c,f_e):\Sigma(X)\to\Sigma(Y)$, it is easy to verify by
naturality that $f_c=f\circ(-)$; and the same is true in the
exterior component $f_e$. Hence a categorical reading of
Proposition \ref{caracext}(ii) shows that $\Sigma$ is full.
\end{proof}

From now on we consider e-sequential spaces as $\CC$-sets by the
full embedding $\Sigma:\mathbf{Eseq}\hookrightarrow\mathbf{\CC
Set}$. Thus an e-sequential space is completely determined by its
points, convergent sequences an exterior sequences, together with
the maps between them given by the notion of $\CC$-set. Our last
goal is to determine a smaller topos of sheaves
$\mathcal{E}\hookrightarrow\mathbf{\CC Set}$ such that
$\Sigma:\mathbf{Eseq}\hookrightarrow\mathcal{E}$.

\medskip

Recall from \cite{J} (with a different formulation based on
monoids) that given a topological space $X$, the $M^+$-set
$\Sigma_c^+(X)$ is a sheaf when we consider on the monoid $M^+$
the Grothendieck topology $\JJ_c$ formed by the ideals I such
that:
\begin{enumerate}
\item[(i)] $C\subset I$,

\item[(ii)] $\forall u\in E,\;\exists v\in E\; ; u\circ
v\in I$,
\end{enumerate}
\noindent where $C$ is the set of all constant maps in $M^+$, and
$E$ is the submonoid of $M^+$ constituted by all the monotone and
injective maps $u:\NN^+\to\NN^+$ (hence
$u^{-1}(\infty)=\{\infty\}$). We can translate this scheme to the
monoid $M$, which has not constants. By deleting
$u(\infty)=\infty$, we can see $E$ as a submonoid of $M$ and
define a family  $\JJ_e$ of ideals of $M$ by means of the
condition (ii) above. Then $\JJ_e$ is a Grothendieck topology on
$M$ \cite{E-G-M} (actually the double negation topology). The
proof of the following result is long but straightforward.

\begin{Le}
For any object $A$ of $\CC$ we define a collection $\JJ(A)$ of
families of morphisms of $\CC$ with codomain $A$ as follows:
\begin{enumerate}
\item[(i)] The unique family in $\JJ(1)$ is all the morphisms with
codomain $1$.

\item[(ii)] A family in $\JJ(\NN^+)$, denoted $\hat I$, is formed by
all the morphisms $1\to\NN^+$ and the morphisms $\NN^+\to\NN^+$ of
an ideal $I\in\JJ_c$.

\item[(iii)] A family in $\JJ(\NN)$, denoted $\hat I$, is formed by
all the morphisms $1\to\NN$,  all the morphisms $\NN^+\to\NN$, and
the morphisms $\NN\to\NN$ of an ideal $I\in\JJ_e$.
\end{enumerate}
Then $\JJ$ is a Grothendieck topology on $\CC$.
\end{Le}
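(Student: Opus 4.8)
The plan is to verify directly the three defining axioms of a Grothendieck topology --- maximality of the total sieve, stability under pullback, and transitivity (local character) --- reducing in each case the ``self-map'' part of the argument to the fact, already recorded above, that $\JJ_c$ and $\JJ_e$ are Grothendieck topologies on the monoids $M^+$ and $M$. First I would make explicit the composition structure of $\CC$: the only morphisms with codomain $1$ are $\id$ and $!$; those with codomain $\NN^+$ are the points $1\to\NN^+$ together with $M^+$; and those with codomain $\NN$ are the points $1\to\NN$, the constants $C$, and $M$. One checks that precomposition sends points to points and sends $C$ to $C$ (using $c_n\circ u=c_n$ for $u\in M^+$ and $u\circ c_n=c_{u(n)}$ for $u\in M$), and that it respects the right-ideal structure, so that each $\hat I$ is indeed a sieve; the points (and, over $\NN$, the constants $C$) are precisely the morphisms that $\hat I$ contains irrespective of $I$.

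For maximality one notes that $M^+\in\JJ_c$ and $M\in\JJ_e$, so $\widehat{M^+}$ and $\widehat{M}$ are the total sieves and lie in $\JJ$; over $1$ the only listed sieve is already the total one. The computation underlying both remaining axioms is that pulling a covering sieve back along a point or a constant yields a total sieve (because $u\circ p$ is again a point and $c_n\circ g$ is again a constant or a point), while pulling $\hat I$ back along a self-map $u$ gives exactly $\widehat{u^{*}(I)}$, where $u^{*}(I)=\{v\mid u\circ v\in I\}$. Stability is then immediate: $u^{*}(I)\in\JJ_c$ (resp.\ $\JJ_e$) by stability of $\JJ_c$ (resp.\ $\JJ_e$), and the pullbacks along points and constants are total, hence covering.

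The transitivity axiom is where the argument requires the most care, and is the step I expect to be the main obstacle. Suppose $\hat I\in\JJ(A)$ and $R$ is a sieve on $A$ with $h^{*}(R)$ covering for every $h\in\hat I$. The key preliminary observation is that covering sieves over $1$ are total, so applying the hypothesis to the points $p\in\hat I$ forces every point to lie in $R$; applying it to the constants $c_n$ (when $A=\NN$) forces $C\subset R$, since a sieve over $\NN^+$ whose self-map part is empty cannot be covering. Thus $R$ already contains all of its mandatory morphisms, and it remains to control the ideal $J=R\cap M^+$ (resp.\ $R\cap M$). Applying the hypothesis to each self-map $u\in I$ and using $u^{*}(R)\cap M^{+}=u^{*}(J)$ shows that $u^{*}(J)$ is covering for every $u\in I$; transitivity of $\JJ_c$ (resp.\ $\JJ_e$) then gives $J\in\JJ_c$ (resp.\ $\JJ_e$), so $R=\hat J\in\JJ(A)$. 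Over $1$ transitivity is trivial since $\id^{*}(R)=R$. Assembling these verifications establishes that $\JJ$ is a Grothendieck topology; the only genuinely new content beyond $\JJ_c$ and $\JJ_e$ is the bookkeeping showing that points and constants never obstruct any of the three conditions.
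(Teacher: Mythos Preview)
Your proposal is correct and is exactly the kind of direct verification the paper has in mind: the paper itself omits the proof entirely, stating only that it ``is long but straightforward.'' Your reduction of each Grothendieck-topology axiom to the corresponding axiom for $\JJ_c$ and $\JJ_e$, together with the bookkeeping showing that points and constants never obstruct, is precisely the intended argument. One minor simplification: in the transitivity step over $\NN$, once you know every point $n:1\to\NN$ lies in $R$, the sieve condition alone already gives $c_n=n\circ !\in R$, so you do not need to invoke the hypothesis on $c_n^*(R)$ separately (though your argument via nonemptiness of ideals in $\JJ_c$ is also valid).
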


We denote the subtopos of sheaves
$\mathcal{E}=sh(\CC,\JJ)\hookrightarrow\CC Set$.

\begin{Th}
The embedding $\Sigma:\mathbf{Eseq}\hookrightarrow\mathcal{E}$
holds.
\end{Th}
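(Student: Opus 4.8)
We begin by reducing the theorem to a single verification. By Proposition \ref{embed1} the functor $\Sigma\colon\mathbf{Eseq}\to\CC Set$ is already full, faithful and injective on objects, and since $\mathcal{E}=sh(\CC,\JJ)$ is a \emph{full} subcategory of $\CC Set$, all three properties are inherited as soon as $\Sigma$ factors through $\mathcal{E}$. Thus the entire content of the theorem is the claim that for every e-sequential exterior space $X$ the $\CC$-set $\Sigma(X)=(X,\Sigma_c^+(X),\Sigma_e(X))$ is a $\JJ$-sheaf. The plan is to check the sheaf condition object by object, using the explicit description of the covering sieves given in the Lemma above. At the object $1$ there is nothing to prove: the only covering sieve is the maximal one, for which the condition is automatic.

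At the object $\NN^+$ the covering sieves are the families $\hat I$ with $I\in\JJ_c$. Given a matching family, I would restrict it to the endomorphisms $\NN^+\to\NN^+$ lying in $I$; this restriction is exactly a matching family for the $M^+$-set $\Sigma_c^+(X)$ relative to $\JJ_c$, and it amalgamates to a unique $s\in\Sigma_c^+(X)$ because $\Sigma_c^+(X)$ is a $\JJ_c$-sheaf by \cite{J}. It then remains to see that $s$ reproduces the values assigned to the morphisms $1\to\NN^+$, but this is forced: since $C\subset I$, every point $p\colon1\to\NN^+$ factors as $c\circ q$ with $c\in C\subset I$ and $q\colon1\to\NN^+$, so the matching condition determines the value at $p$ from that at $c$, hence from $s$. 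Uniqueness is inherited from the $\JJ_c$-sheaf property.

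At the object $\NN$ the covering sieves are the families $\hat I$ with $I\in\JJ_e$, and each such sieve contains all the constant morphisms $c_n\colon\NN^+\to\NN$ and all points $1\to\NN$. Given a matching family, I would first observe that $c_n\circ u=c_n$ for every $u\in M^+$, so $x_{c_n}\in\Sigma_c^+(X)$ is fixed by the whole monoid $M^+$, which forces $x_{c_n}$ to be the constant convergent sequence at some point $a_n\in X$. Setting $s(n):=a_n$ gives a map $s\colon\NN\to X$. Applying the matching condition to the composites $w\circ c_n=c_{w(n)}$ for $w\in I$ yields that $x_{c_{w(n)}}$ is the constant sequence at $x_w(n)$, whence $x_w(n)=a_{w(n)}=(s\circ w)(n)$; that is, $s\circ w=x_w$ for all $w\in I$. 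The values at the points $1\to\NN$ and at the $c_n$ are reproduced by $s$ by construction, and uniqueness follows from the $c_n$-components alone.

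The one genuinely non-formal step — and the point I expect to be the main obstacle — is to show that this amalgamated $s$ is actually an \emph{exterior} sequence, $s\in\Sigma_e(X)$. Continuity is free since $\NN$ is discrete, so the issue is $s\propto U$ for every e-open $U$. Here I would argue by contradiction, exploiting the double negation character of $\JJ_e$: if $s^{-1}(X-U)$ were infinite, enumerate it by a monotone injection $a\in E$; by condition (ii) defining $\JJ_e$ there is $v\in E$ with $w:=a\circ v\in I$, and then $s\circ w=x_w$ is an exterior sequence lying entirely in $X-U$, contradicting $x_w\propto U$. This establishes $s\in\Sigma_e(X)$ and completes the proof that $\Sigma(X)$ is a sheaf, so that $\Sigma$ factors through $\mathcal{E}$ as required. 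It is worth noting that e-sequentiality of $X$ is never used in the sheaf verification itself (in fact every exterior space yields a sheaf); it enters only through Proposition \ref{embed1}, where it guarantees that the embedding into $\mathcal{E}$ is full.
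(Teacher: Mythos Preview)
Your proof is correct and follows essentially the same route as the paper: reduce via Proposition~\ref{embed1} to checking that $\Sigma(X)$ is a $\JJ$-sheaf, then verify this at each of the three objects, with the case~$1$ trivial, the case~$\NN^+$ covered by Johnstone's result, and the case~$\NN$ handled by extracting the underlying map and proving it exterior via the double-negation property of~$\JJ_e$. The paper reads the candidate map~$s$ at~$\NN$ directly from the component at the object~$1$ (rather than from the $c_n$'s as you do) and defers both the $\NN^+$ case and the exteriority argument to \cite{J} and \cite{E-G-M} respectively, whereas you spell these out; your observation that e-sequentiality is used only for fullness, not for the sheaf condition, exactly matches the paper's closing remark.
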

\begin{proof}[Proof]
By Proposition \ref{embed1}, it suffices to prove that $\Sigma(X)$
is a sheaf for any e-sequential space $X$. This should be done on
the three objects $1,\NN^+,\NN$. The case 1 is obvious and for the
case $\NN^+$ we refer the reader to \cite{J}. The case $\NN$ is
analogous to the last one. We consider $\hat I$ as a subpresheaf
of $y(\NN)$. Given a natural transformation $\theta:\hat
I\to\Sigma(X)$ we must find a unique $s\in\Sigma_e(X)$ such that
$\theta$ is the restriction of $y(s)=(s,s\circ(-),s\circ(-))$. But
$\theta$ is of the form $\theta=(s,s\circ(-),H)$, where $s:\NN\to
X$ and $H:I\to\Sigma_e(X)$ is an M-equivariant map; and for any
$n\in\NN$, $ev_n\circ H=s\circ ev_n$, that is, $H(g)=s\circ g$ for
any $g\in I$. Since $I\in\JJ_e$ and each $H(g)$ is exterior
\cite{E-G-M} the latter condition means that $s$ is exterior.
\end{proof}

\begin{remark} Note that the sequentiality is not needed for the sheaf
condition, but only to apply Proposition \ref{embed1}.
\end{remark}


\begin{thebibliography}{22}


\bibitem{A} {\sc A. Arhangel'skii}. Some types of factor mappings
and the relations between classes of topological spaces. {\em
Soviet Math. Dokl.} {\bf 4} (1963), 1726-1729.

\bibitem{Ba} {\sc S. Baron}. The coreflective subcategory of
sequential spaces. {\em Canad. Math. Bull.}, {\bf 11} (1968)
603-604.

\bibitem{Bor} {\sc F. Borceux}. {\em Handbook of Categorical
Algebra}. Encyclopedia of Math. and its applications. Cambridge
University Press. Vol. {\bf 50},{\bf 51} (1994).

\bibitem{Br} {\sc R. Brown}. On sequentially proper maps and a
sequential compactification. {\em J. London Math. Soc.} (2), {\bf
7} (1973), 515-522.


\bibitem{Dud} {\sc R. M. Dudley}. On sequential convergence.
{\em Trans. Amer. Math. Soc.}, {\bf 112} (1964), 483-507.

\bibitem{Dug} {\sc J. Dugundji}. {\em Topology}. Allyn and Bacon,
Boston, 1966.

\bibitem{E-H} {\sc  D. Edwards} and {\sc H. Hastings}.
{\em ${\check {\rm C}}{\rm ech}$ and Steenrod homotopy theories
with applications to Geometric Topology.} Lect. Notes Math. {\bf
542} (Springer, 1976).


\bibitem{E-G-M}
{\sc L. Espa\~nol, J. M. Garc\'{\i}a Calcines, M. C.
M\'{\i}nguez}. Exterior sets as sheaves. {\em To appear}.

\bibitem{Fr} {\sc S. P. Franklin}. Spaces in which sequences
suffice. {\em Fund. Math.}, {\bf 57} (1965), 107-116.

\bibitem{Fre} {\sc M. Fr\'{e}chet}. Sur la notion de voisinage dans
les ensembles abstraits. {\em Bull. Sci. Math.} {\bf 42} (1918),
138-156.

\bibitem{G-G-H} {\sc J. M. Garc\'{\i}a Calcines, M. Garc\'{\i}a Pinillos
and L. J. Hern\'{a}ndez Paricio}. A closed simplicial model
category for proper homotopy and shape theories. {\em Bull.
Austral. Math. Soc.}, {\bf 57} (1998) 221-242.


\bibitem{G-H} {\sc J. M. Garc\'{\i}a Calcines and L. J. Hern\'{a}ndez
Paricio}. Sequential homology. {\em Topology and its Appl.}, {\bf
114} (2001) 201-225.

\bibitem{G-H-R} {\sc J. M. Garc\'{\i}a Calcines, L. J. Hern\'{a}ndez
Paricio and S. Rodr\'{\i}guez Mach\'{\i}n}. Simplicial Proper
Homotopy I,II. {\em Rev. Acad. Cienc. Zaragoza (2)} {\bf 57}
(2002) 89-112,113-134.

\bibitem{G-M} {\sc I. Gotchev and H. Minchev}. On sequential
properties of Noetherian topological spaces. Proceedings of the
18th Summer Conference on Topology and its Applications. {\em
Topology Proc.}, {\bf 28} (2004) 487-501.

\bibitem{J} {\sc P. T. Johnstone}. On a topological topos. {\em
Proc. London Math. Soc.}, {\bf 38} (1979) 237-271.

\bibitem{Jo} {\sc P. T. Johnstone}. \emph{Stone spaces}.  Cambridge University Press, 1982.

\bibitem{K} {\sc J. Kisynski}. Convergence du type L. {\em Colloq.
Math.}, {\bf 7} (1960) 205-211.

\bibitem{M} {\sc  S. Mac Lane}. {\em Categories for the Working
Mathematician}. Springer-Ver\-lag, 1971.


\bibitem{U} {\sc P. Urysohn}. Sur les classes $L$ de M. Fr\'{e}chet.
{\em Enseignement Math.} {\bf 25} (1926), 77-83.

\bibitem{W} {\sc A. Wilansky}. Between $T_1$ and $T_2.$ {\em Amer. Math.
Monthly} {\bf 74} (1967), 261-266.



\end{thebibliography}
\end{document}